\ifdefined\JNTcompact
  \documentclass[final,3p,10pt,number]{elsarticle}
\else
  \documentclass[preprint,12pt,number]{elsarticle}
\fi
\usepackage[T1]{fontenc}
\usepackage{lmodern}
\usepackage{amsmath,amssymb,amsthm,mathtools}
\usepackage{microtype,booktabs,array,needspace,float}
\usepackage[hidelinks]{hyperref}
\numberwithin{equation}{section}
\newtheorem{theorem}{Theorem}[section]
\newtheorem{proposition}{Proposition}[section]
\newtheorem{lemma}{Lemma}[section]
\newtheorem{corollary}{Corollary}[section]
\newcommand{\dd}{\,\mathrm{d}}
\newcommand{\R}{\mathbb R}
\newcommand{\Q}{\mathbb Q}
\newcommand{\Z}{\mathbb Z}
\newcommand{\A}{\mathbb A}
\newcommand{\GL}{\mathrm{GL}}
\newcommand{\SL}{\mathrm{SL}}
\DeclareMathOperator{\supp}{supp}
\hypersetup{pdftitle={Zero density in short strips and short-interval bounds toward Hypothesis S},
 pdfauthor={Yangbo Ye},pdfsubject={Manuscript prepared for Journal of Number Theory}}
\journal{Journal of Number Theory}

\begin{document}
\begin{frontmatter}
\title{Zero density in short strips and short-interval bounds toward Hypothesis S}
\author{Yangbo Ye}
\ead{yangbo-ye@uiowa.edu}
\affiliation{organization={Department of Mathematics, The University of Iowa},
 city={Iowa City}, state={IA}, postcode={52242}, country={USA}}

\begin{abstract}
We deduce zero-density estimates in short vertical strips from
short-interval moments of globally tempered automorphic $L$-functions,
by adapting the large-value estimate of Guth and Maynard and the
Type II zero argument of Maynard and Pratt to the framework of Ye
and Zhang. For zeta we obtain
$N_\zeta(\sigma;T,T+T^a)\ll T^{15a(1-\sigma)/(3+5\sigma)+\varepsilon}$
for $2/3\le a\le1$ and $7/10\le\sigma\le4/5$, improving
Ye--Zhang's exponent in the interior of this range.
Combining the localized density bound with the recent quasi-Riemann
theorems, we estimate smoothed prime-power sums with phase
$\exp(2\pi i\alpha n^\beta)$ on intervals of length $K^\delta$.
For $0<\beta<13/20$, we obtain a power saving whenever
$\max\{17/30,1-\beta\}<\delta<1$; in particular the square-root
phase admits every $\delta>17/30$. A revised transform estimate
retains both the stationary spread and the Fourier width $K/L$.
For example, the bound at $\beta=1/2$, $\delta=3/5$ is
$K^{15/26+\varepsilon}$, uniformly for frequencies $m\alpha$
with $1\le m\le K^{1/10}$.
We also retain power savings for certain self-dual $\GL(2)$ twists
and self-dual $\GL(3)$ cusp forms, without assuming Ramanujan.
All three applications have explicit frequency dependence.
\end{abstract}

\begin{keyword}
Zero density in short strips \sep Hypothesis S \sep Short intervals
\sep Exponential sums over primes \sep Automorphic $L$-functions
\sep Large values of Dirichlet polynomials \sep Quasi-Riemann hypothesis
\MSC[2020] Primary 11M26 \sep Secondary 11L20 \sep 11F66 \sep 11M41
\end{keyword}
\end{frontmatter}

\section{Introduction}

Zero-density estimates connect the mean values of $L$-functions with
the distribution of primes and cancellation in arithmetic sums.
Throughout, $\varepsilon>0$ is arbitrary.
Our main results refine the short-strip framework of Ye--Zhang
\cite{YZ}, using Guth--Maynard's large-value estimate \cite{GM} and
the Type II integral argument of Maynard--Pratt
\cite[Lemma~6.3 and Appendix~C]{MP}. Temperedness supplies the
bounded coefficients required by \cite{GM}, after a
$T^\varepsilon$ normalization. Write $N_\pi(\sigma;T,T+H)$ for the
number, with multiplicity, of nontrivial zeros satisfying
$\Re\rho>\sigma$ and $T\le\Im\rho\le T+H$.

\begin{theorem}[A moment-to-density bound]\label{thm:general-density}
Let $\pi$ be a fixed globally tempered, unitary cuspidal representation
of $\GL_d(\A_F)$ over a fixed number field $F$; zeta is also allowed.
For fixed $0<a\le1$, $\ell\ge1$, and $\theta\ge a$, suppose
\begin{equation}\label{eq:moment-hypothesis}
 \int_U^{U+U^a}|L(\tfrac12+it,\pi)|^{2\ell}\dd t
 \ll_\eta U^{\theta+\eta}
\end{equation}
for every $\eta>0$, uniformly for all sufficiently large real $U$.
Then, for $3/5\le\sigma<1$ and $T\ge3$,
\begin{equation}\label{eq:general-density}
 \begin{gathered}
 N_\pi(\sigma;T,T+T^a)\ll T^{(1-\sigma)\mathcal D+\varepsilon},
 \\[2pt]
 \mathcal D=\max\!\left\{\frac{15a}{3+5\sigma},
       \frac{4\theta}{\ell(2\sigma-1)+4(1-\sigma)}\right\}.
 \end{gathered}
\end{equation}
The implied constants may also depend on $\pi,a,\ell,\theta$.
\end{theorem}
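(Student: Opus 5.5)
The plan is to follow the classical zero-detection scheme, localized to the window $[T,T+T^a]$, and to split the detected zeros into two classes. One class is handled by the Guth--Maynard large-value estimate and produces the term $15a/(3+5\sigma)$. The other is handled by the moment hypothesis \eqref{eq:moment-hypothesis} through a Type II/Halász--Montgomery integral argument in the style of Maynard--Pratt, and produces the second term of $\mathcal D$.

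\textbf{Zero detection.} We mollify $L(s,\pi)$ by $M_X(s)=\sum_{n\le X}\mu_\pi(n)n^{-s}$, where $\mu_\pi$ are the Dirichlet coefficients of $L(s,\pi)^{-1}$. Temperedness gives $|\mu_\pi(n)|\ll n^{\varepsilon}$, so after a $T^\varepsilon$ normalization these coefficients are bounded, as \cite{GM} requires. We then integrate $L M_X$ against a smooth weight $e^{-n/Y}$ and shift the contour to $\Re s=1/2$. Each zero $\rho$ with $\Re\rho>\sigma$ and $T\le\Im\rho\le T+T^a$ then satisfies one of two conditions, up to a discarded set of $\ll T^{\varepsilon}$ zeros per unit interval handled by well-spacing. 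Either (I) a dyadic piece $D_N(s)=\sum_{N<n\le 2N}b_n n^{-s}$ with $X\le N\le YT^\varepsilon$ and bounded $b_n$ satisfies $|D_N(\rho)|\gg T^{-\varepsilon}$, or (II)
\[
\int_{|t-\Im\rho|\le T^{\varepsilon}}|L(\tfrac12+it,\pi)M_X(\tfrac12+it)|\,Y^{1/2-\sigma}\dd t\gg T^{-\varepsilon}.
\]
After passing to a $1$-spaced subset, both classes live in an interval of length $T^a$.

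\textbf{Class I.} Raise $D_N$ to a power $k$ so that $N^k$ lies in the optimal range relative to $T^a$. Then apply the Guth--Maynard large-value estimate \cite{GM}, localized to an interval of length $T^a$. A Dirichlet polynomial of length $N$ with $R$ well-spaced points in $[T,T+T^a]$ where $|D|\ge N^{\sigma}$ satisfies a bound of the shape $R\ll T^{\varepsilon}(N^{2-2\sigma}+N^{18/5-4\sigma}T^{a\cdot\text{(coeff)}}+\dots)$, with $T$ replaced by $T^a$. Optimizing over $N$ gives $R\ll T^{15a(1-\sigma)/(3+5\sigma)+\varepsilon}$, which is exactly the zeta-type Guth--Maynard exponent with $T$ scaled by $a$. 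The restriction $\sigma\ge3/5$ is where this exponent dominates the trivial and Huxley-type branches. I expect this step to be the main obstacle. One must check that the GM argument is genuinely local, i.e.\ that its additive energy and large-sieve inputs hold with $T^a$ in place of $T$ uniformly over the shift $T$. One must also check that the Type II/Type I decomposition of \cite[Appendix~C]{MP} survives when $N$ may be as small as $T^{a\cdot c}$. I would try rescaling $t\mapsto t-T$ and absorbing the phase $n^{-iT}$ into the coefficients, which keeps them bounded.

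\textbf{Class II.} Let $R_2$ be the number of class II zeros. By Hölder with exponents $2\ell$ and $2\ell/(2\ell-1)$, the contribution is controlled by
\[
R_2\ll T^{\varepsilon}Y^{(\sigma-1/2)\cdot(\cdot)}\Bigl(\int_{T}^{T+T^a}|L|^{2\ell}\Bigr)^{1/(2\ell)}\Bigl(\sum_\rho\int|M_X|^{2\ell/(2\ell-1)}\Bigr)^{1-1/(2\ell)}.
\]
We bound the $M_X$ factor by a mean value / Halász argument over the well-spaced zeros, and insert \eqref{eq:moment-hypothesis} with $U=T$, giving $T^{\theta+\eta}$. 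Choosing $X$ and $Y$ as powers of $T$ optimally, with $Y\approx T^{\theta\cdot 4/(\ell(2\sigma-1)+4(1-\sigma))}$, balances the terms and yields $R_2\ll T^{4\theta(1-\sigma)/(\ell(2\sigma-1)+4(1-\sigma))+\varepsilon}$. Here the hypothesis $\theta\ge a$ ensures that the $T^a$ mean-value diagonal is dominated.

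\textbf{Conclusion.} Summing the two classes over the $O(\log T)$ dyadic ranges gives \eqref{eq:general-density}. Uniformity in $T\ge3$ follows because all bounds depend only on $\pi,a,\ell,\theta$.
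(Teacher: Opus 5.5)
Your proposal has a genuine gap in Class I. The detector gives you dyadic pieces of every length $N$ up to about $Y$. You cannot "optimize over $N$", and powering only lengthens a polynomial. Your worry about small $N$ is not the issue: $N\ge X/2=T^{\kappa}/2$ keeps $k$ bounded. The real obstacle is at the top of the range. Once $N$ exceeds the optimal Guth--Maynard length $H^{15/(2(3+5\sigma))}$, with $H=T^a$, no $k\ge1$ brings $N^k$ into the optimal range. Both available large-value bounds, Guth--Maynard's and the mean-value estimate, contain the diagonal term $N^{2(1-\sigma)}$. So the Type I count must depend on $Y$.

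The paper handles this with an auxiliary length $Z\ge Y^2$:
\begin{itemize}
\item each piece is powered into $[Z^{2/3},Z(\log T)^4]$, squared when $M>Z^{1/3}$;
\item Guth--Maynard is used for the shorter powered lengths and the mean-value bound for the longer ones, giving $R_{\mathrm I}\ll T^\varepsilon(Z^{2(1-\sigma)}+HZ^{8/5-8\sigma/3})$;
\item the choice $Z=\max\{H^{15/(2(3+5\sigma))},Y^2\}$ then yields
\[
R_{\mathrm I}\ll T^\varepsilon\bigl(H^{15(1-\sigma)/(3+5\sigma)}+Y^{4(1-\sigma)}\bigr).
\]
\end{itemize}
The term $Y^{4(1-\sigma)}$ is missing from your proposal, and it is exactly what produces the second entry of $\mathcal D$.

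Without that term, your Class II balancing has nothing to balance against. The bound $R_{\mathrm{II}}\ll T^{\theta+\varepsilon}Y^{-\ell(2\sigma-1)}$ decreases in $Y$. A $Y$-independent Class I bound would therefore let you take $Y$ huge and conclude with only the first entry of $\mathcal D$, which is more than the argument supports.

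Your stated choice $Y\approx T^{4\theta/(b+c)}$, with $b=\ell(2\sigma-1)$ and $c=4(1-\sigma)$, is also inconsistent:
\begin{itemize}
\item it gives $T^\theta Y^{-b}=T^{\theta(c-3b)/(b+c)}$, not $T^{c\theta/(b+c)}$;
\item for zeta ($\ell=2$, $\theta=a$, $b+c=2$) it makes $Y\approx T^{2a}$, and pieces of length near $Y$ then carry the term $T^{4a(1-\sigma)}$, which exceeds the Guth--Maynard term.
\end{itemize}
The correct balance is $Y^{c}=T^{\theta}Y^{-b}$, that is, $Y=T^{\theta/(b+c)}$.

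For Class II itself, your split H\"older step and the Hal\'asz treatment of $M_X$ are unnecessary. Take $X=T^\kappa$ with $\kappa$ tiny and bound $M_X(\tfrac12+iv)\ll X^{1/2+\nu}$ trivially. Apply H\"older to $\bigl(\int_{-V}^{V}|LM_X|\bigr)^{2\ell}$. Use the $O(V\log T)$ overlap of the intervals around zeros, and cover $[T-V,T+H+V]$ by $O(1)$ intervals $[U,U+U^a]$.

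Finally, before invoking the large-value theorem you must move the evaluations from $\Re\rho>\sigma$ to the line $\Re s=\sigma$, as in Guth--Maynard, Section~13.1. You then re-extract a separated set, since the new points are no longer zero ordinates.
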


Combining this with the short-interval fourth moment of zeta due
to Heath-Brown \cite{HB} and Iwaniec \cite{Iw}, in the form recorded
in \cite[(2.1)]{YZ}, and the earlier density bounds gives:

\begin{theorem}[Zeta zeros in short strips]\label{thm:zeta-density}
For $2/3\le a\le1$, $1/2\le\sigma<1$, and $T\ge3$,
\begin{equation}\label{eq:zeta-density}
 N_\zeta(\sigma;T,T+T^a)\ll_{a,\varepsilon}T^{a\mathcal B(\sigma)+\varepsilon},
 \qquad
 \mathcal B(\sigma)=
 \begin{cases}
 \dfrac{3(1-\sigma)}{2-\sigma},&\dfrac12\le\sigma\le\dfrac7{10},\\[5pt]
 \dfrac{15(1-\sigma)}{3+5\sigma},&\dfrac7{10}\le\sigma\le\dfrac45,\\[5pt]
 \dfrac{3(1-\sigma)}{3\sigma-1},&\dfrac45\le\sigma\le\dfrac56,\\[5pt]
 2(1-\sigma),&\dfrac56\le\sigma<1.
 \end{cases}
\end{equation}
\end{theorem}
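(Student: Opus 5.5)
The plan is to obtain Theorem~\ref{thm:zeta-density} piecewise: the new middle range comes from Theorem~\ref{thm:general-density}, and the other three ranges come from the earlier short-strip bounds of Ye--Zhang \cite{YZ}, which are quoted as given.

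For the middle range we apply Theorem~\ref{thm:general-density} to zeta with $\ell=2$. By Heath-Brown \cite{HB} and Iwaniec \cite{Iw}, as recorded in \cite[(2.1)]{YZ}, we have $\int_U^{U+U^a}|\zeta(\tfrac12+it)|^4\dd t\ll U^{a+\eta}$ for $2/3\le a\le1$. So \eqref{eq:moment-hypothesis} holds with $\theta=a$. With $\ell=2$ the second entry of $\mathcal D$ becomes
\[
\frac{4a}{2(2\sigma-1)+4(1-\sigma)}=\frac{4a}{2}=2a .
\]
Hence $\mathcal D=a\max\{15/(3+5\sigma),\,2\}$. Since $15/(3+5\sigma)\ge2$ exactly when $\sigma\le 9/10$, we get $\mathcal D=15a/(3+5\sigma)$ on $3/5\le\sigma\le 9/10$. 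In particular \eqref{eq:general-density} gives $N_\zeta\ll T^{15a(1-\sigma)/(3+5\sigma)+\varepsilon}$ on $[7/10,4/5]$. This is the second line of \eqref{eq:zeta-density}.

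For the other three ranges we import the Ye--Zhang bounds, all derived from the same fourth moment. These are the Ingham-type bound $T^{3a(1-\sigma)/(2-\sigma)+\varepsilon}$, the Huxley-type bound $T^{3a(1-\sigma)/(3\sigma-1)+\varepsilon}$, and the density-hypothesis bound $T^{2a(1-\sigma)+\varepsilon}$ for $\sigma\ge5/6$. If $\mathcal B$ is defined as the minimum of the available exponents, these bounds are the ones that give the first, third and fourth lines.

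It remains to check that the four pieces are consistent and that each is the minimum on its interval; this is routine algebra.
\begin{itemize}
\item[(i)] At $\sigma=7/10$: $3/(2-\sigma)=3/1.3=30/13$ and $15/(3+5\sigma)=15/6.5=30/13$, so the first two lines agree.
\item[(ii)] At $\sigma=4/5$: $15/7$ equals $3/(3\sigma-1)=3/1.4=15/7$, so the second and third lines agree.
\item[(iii)] At $\sigma=5/6$: $3/(3\sigma-1)=3/1.5=2$, so the third and fourth lines agree.
\item[(iv)] Comparing $3(3+5\sigma)$ with $15(2-\sigma)$ gives $9+15\sigma$ versus $30-15\sigma$. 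So the new bound beats Ingham's exactly when $\sigma>7/10$.
\item[(v)] Comparing $15(3\sigma-1)$ with $3(3+5\sigma)$ gives $45\sigma-15$ versus $9+15\sigma$. So the new bound beats Huxley's exactly when $\sigma<4/5$.
\end{itemize}
Since each line is the minimum of the available exponents on its interval, \eqref{eq:zeta-density} follows.

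The main obstacle is bookkeeping rather than analysis. The earlier bounds must be cited in exactly the stated ranges, with the exponent multiplied by $a$ and with the same restriction $2/3\le a\le1$. In particular, the Huxley-type bound must be available on $[4/5,5/6]$ and the density-hypothesis bound $2a(1-\sigma)$ for $\sigma\ge5/6$. If \cite{YZ} gives these only for a narrower range, I would instead derive the Huxley-type piece directly from the fourth moment via the classical large-values argument adapted to short strips.
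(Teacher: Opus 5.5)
Your proposal is correct and is essentially the paper's proof. The middle range comes from Theorem~\ref{thm:general-density} with $\ell=2$, $\theta=a$ and the Heath-Brown--Iwaniec fourth moment, where the Type II entry collapses to $2a$. The other three ranges come from Ye--Zhang's short-strip Ingham-, Huxley- and density-hypothesis-type bounds, which are stated there on $[1/2,3/4]$, $(3/4,5/6]$ and $(5/6,1)$, so your fallback is unnecessary. Your comparisons at $7/10$ and $4/5$ match the paper's. The argument only needs each line to be an available bound on its interval, not the minimum of all conceivable exponents.
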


The improvement over \cite[Theorem~2.1]{YZ} is strict on
$7/10<\sigma<4/5$: at $\sigma=3/4$, the exponent decreases
from $3a/5$ to $5a/9$. The case $a=1$ recovers the corresponding
bound of Guth--Maynard; the gain for $a<1$ concerns localization
in the ordinate. With the moment inputs used here, this argument
does not improve the cusp-form density bounds, even in the
holomorphic case where Deligne \cite{Del} gives temperedness.
For general Maass forms, temperedness is itself unproved.

OpenAI's quasi-Riemann papers establish zero-free half-planes
$\Re s>11/12$ \cite{OAI11} and $\Re s>7/8$ \cite{OAI7}
for zeta and Dirichlet $L$-functions, as well as finite-order
Hecke $L$-functions over $\Q(\sqrt{-3})$. Liu
\cite[Theorem~27.1]{BLiu} further improves the boundary to
$0.874957019420098946\ldots$. We use $7/8$ in our main formulas
and record Liu's refinement in Section~\ref{sec:zeta}.
Thus $N_\zeta(\sigma;T,T+T^a)=0$ for $\sigma\ge7/8$ under
our strict counting convention. Possible zeros on $\Re s=7/8$
are retained in the weighted sums. The density improvement on
$(7/10,4/5)$ is unaffected: these zero-free results do not replace
the estimates there. They do not cover the general cusp forms
in Theorems~\ref{thm:gl2-sum}--\ref{thm:gl3-sum}.

Hypothesis S of Iwaniec, Luo, and Sarnak \cite[(1.62)]{ILS} predicts
\begin{equation}\label{eq:hypothesis-s}
 \sum_{\substack{p\le X\\p\equiv b\!\!\pmod c}}
 e\!\left(\frac{2\sqrt p}{c}\right)
 \ll_\varepsilon X^{1/2+\varepsilon},\qquad (b,c)=1,
\end{equation}
uniformly for $X\ge1$ and integers $c\ge1$, where $e(x)=e^{2\pi i x}$.
A bound $O_\varepsilon(c^A X^{\vartheta+\varepsilon})$ with fixed $A$
and $\vartheta<3/4$ would already enlarge the support in their
one-level density theorem. Vinogradov \cite[Ch.~4, \S2]{Vin} proved
$\sum_{p\le X}e(\alpha\sqrt p)\ll X^{7/8+\varepsilon}$ for fixed
$\alpha\ne0$. Ren \cite{Ren} obtained
\begin{equation}\label{eq:ren}
 \sum_{X<n\le2X}\Lambda(n)e(\alpha n^\beta)
 \ll_{\alpha,\beta}
 \bigl(X^{3/4+\beta/10}+X^{1-\beta/2}\bigr)(\log X)^{11},
 \qquad 0<\beta\le\tfrac12,
\end{equation}
giving exponent $4/5$ for the square-root phase. The classical
density hypothesis would give exponent $3/4$ in that case; see
\cite{ILS,Ren}. Further background is given in \cite{YeRIMS}.

We apply the short-strip estimates to short-interval analogues
for zeta and automorphic $L$-functions. Define $a_\pi(n)$ in a right half-plane by
\begin{equation}\label{eq:logderivative}
 -\frac{L'}{L}(s,\pi)=\sum_{n\ge1}a_\pi(n)\Lambda(n)n^{-s},
\end{equation}
and, for fixed $\phi\in C_c^\infty((-1,1))$, put
\begin{equation}\label{eq:sum}
 S_\pi(\alpha,\beta;K,L)
 =\sum_{n\ge1}a_\pi(n)\Lambda(n)e(\alpha n^\beta)
                 \phi\!\left(\frac{n-K}{L}\right).
\end{equation}
Thus $a_\zeta(n)=1$; at higher prime powers the cusp-form
coefficients in \eqref{eq:logderivative} need not equal the standard
Dirichlet coefficients. All forms, characters and
$\alpha\in\R\setminus\{0\}$ are fixed. Put
\begin{equation}\label{eq:scales}
 0<\beta\le1,\qquad L=K^\delta,\qquad
 1-\beta<\delta<1,\qquad \lambda=\beta+\delta-1>0.
\end{equation}
For the two cusp-form applications we impose the additional condition
$\delta>1-\beta/2$. Fix $R>0$ and write
\begin{equation}\label{eq:frequency-parameters}
 1\le m\le K^R,\qquad u=\frac{\log m}{\log K},\qquad
 b=\beta+u,\qquad v=\lambda+u.
\end{equation}
Then $0<v<b$, and $b/2<v<b$ in the cusp-form applications.
All bounds are uniform in $m$; constants may depend on $R$, the
fixed data and $\varepsilon$. A saving means
$O(K^{\delta-\omega+\varepsilon})$ with fixed $\omega>0$.

Define
\begin{equation}\label{eq:q-function}
 \mathcal Q(x)=
 \begin{cases}
 7/8+x/4,&0\le x\le91/230,\\[2pt]
 7/10+9x/13,&91/230\le x\le169/300,\\[2pt]
 2+3x-2\sqrt{3x},&169/300\le x\le3/4,\\[2pt]
 1/2+x,&3/4\le x<\infty.
 \end{cases}
\end{equation}

\begin{theorem}[The Riemann zeta function]\label{thm:zeta-sum}
Under \eqref{eq:scales}, fix $R>0$. Uniformly for integers
$1\le m\le K^R$, put $u=\log m/\log K$ and
\[
 \nu_m=\max\{\beta+\delta-1+u,\ 1-\delta,\ 2(\beta+u)/3\}.
\]
Then
\begin{equation}\label{eq:zeta-sum}
 \begin{split}
 S_\zeta(m\alpha,\beta;K,K^\delta)
 &\ll K^{\mathcal Q(\nu_m)+\varepsilon}
       \min\{K^{\delta-1},\,m^{-1/2}K^{-\beta/2}\}.
 \end{split}
\end{equation}
\end{theorem}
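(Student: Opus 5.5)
The plan is the standard explicit-formula route. Write $\phi\bigl(\frac{n-K}{L}\bigr)e(m\alpha n^\beta)=f(n)$ and express $f$ by Mellin inversion, $f(n)=\frac1{2\pi i}\int_{(c)}\tilde f(s)n^{-s}\dd s$. This turns $S_\zeta(m\alpha,\beta;K,L)$ into $\frac1{2\pi i}\int_{(c)}\tilde f(s)(-\zeta'/\zeta)(s)\dd s$. Shifting the contour to the left picks up the pole at $s=1$ and the nontrivial zeros, giving
\[
 S_\zeta=\tilde f(1)-\sum_\rho \tilde f(\rho)+O(K^{\varepsilon}).
\]
The main term $\tilde f(1)=\int f(x)\dd x$ is a smoothed oscillatory integral with phase $m\alpha x^\beta$ of derivative size $mK^{\beta-1}$ over a window of length $L$. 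Since $\lambda+u>0$, it is $O(K^{-A})$ by repeated integration by parts. Everything therefore reduces to the zero sum.

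The first substantive step is a transform estimate for $\tilde f(\rho)=\int \phi\bigl(\frac{x-K}{L}\bigr)e(m\alpha x^\beta)x^{\rho-1}\dd x$ with $\rho=\beta_\rho+i\gamma$. The phase is $2\pi m\alpha x^\beta+\gamma\log x$. A stationary point in $[K-L,K+L]$ requires $|\gamma|\asymp mK^\beta$, localized to a $\gamma$-window of length about $mK^{\beta-1}L=K^{v}$. Outside a slightly enlarged window, integration by parts gives negligibility. Inside it, I would use two bounds. The trivial bound is $K^{\beta_\rho-1}L=K^{\beta_\rho+\delta-1}$. The second-derivative (stationary phase) bound is $K^{\beta_\rho-1}(mK^{\beta-2})^{-1/2}=K^{\beta_\rho}m^{-1/2}K^{-\beta/2}$. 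Together these produce the factor $K^{\beta_\rho}\min\{K^{\delta-1},m^{-1/2}K^{-\beta/2}\}$. I would also keep track of the fact that the Fourier width $K/L$ enlarges the window to length $\max\{K^{v},K^{1-\delta}\}$, which explains the entry $1-\delta$ in $\nu_m$.

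It then remains to bound $\sum_{\rho}K^{\beta_\rho}$ over zeros with $\gamma$ in an interval $[T,T+H]$, where $T\asymp K^{b}$ and $H=K^{\max\{v,1-\delta\}+\varepsilon}$. Writing $H=T^{a}$, the quantity $a$ is the ratio of these two exponents. Theorem~\ref{thm:zeta-density} needs $a\ge2/3$, so when the window is shorter I would enlarge it to length $T^{2/3}$; this is the source of the entry $2b/3$ in $\nu_m$. The window then has length $K^{\nu_m}$, with $\nu_m\ge 2b/3$ in all cases. By partial summation over $\sigma$,
\[
 \sum_\rho K^{\beta_\rho}\ll K^{1/2}N(\tfrac12)+\int_{1/2}^{1}K^{\sigma}\log K\,\dd N_\zeta(\sigma;T,T+H),
\]
and Theorem~\ref{thm:zeta-density} gives $N_\zeta(\sigma;T,T+H)\ll K^{\nu_m\mathcal B(\sigma)+\varepsilon}$. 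The zero-free region $\sigma<7/8$ (or $\le7/8$, keeping zeros on the line) truncates the integral. One must therefore maximize $\sigma+\nu\mathcal B(\sigma)$ over $1/2\le\sigma\le7/8$ with $\nu=\nu_m$.

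The main obstacle is this optimization: showing that the maximum equals $\mathcal Q(\nu)$ piece by piece. The piece $1/2+x$ comes from $\sigma=1/2$, where $\mathcal B=1$. The piece $7/8+x/4$ comes from $\sigma=7/8$, where $\mathcal B=2(1-\sigma)=1/4$. The piece $7/10+9x/13$ comes from $\sigma=7/10$, where $\mathcal B=9/13$. The algebraic piece $2+3x-2\sqrt{3x}$ should come from an interior critical point of $\sigma+3x(1-\sigma)/(2-\sigma)$ at $2-\sigma=\sqrt{3x}$. One also has to check that the remaining branches, $15(1-\sigma)/(3+5\sigma)$ and $3(1-\sigma)/(3\sigma-1)$, are dominated by these candidates for every $x$. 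Each branch is convex or monotone in $\sigma$, so the maximum sits either at the endpoints $1/2,7/10,4/5,5/6,7/8$ or at the single interior critical point. The breakpoints $91/230$, $169/300$ and $3/4$ are where successive candidates cross: for example, $169/300$ is where $\sqrt{3x}=13/10$, and $3/4$ is where $\sqrt{3x}=3/2$. Verifying this is routine but lengthy. Multiplying by the transform factor then gives \eqref{eq:zeta-sum}.
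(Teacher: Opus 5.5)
Your proposal is correct and follows essentially the same route as the paper. The shared steps are:
\begin{itemize}
\item the explicit formula, with negligible pole and gamma terms;
\item the trivial and second-derivative transform bounds, giving $K^{\sigma}\min\{K^{\delta-1},m^{-1/2}K^{-\beta/2}\}$;
\item localization to a window of length $K^{\max\{v,1-\delta\}+\eta}$, padded to $T^{2/3}$ so that Theorem~\ref{thm:zeta-density} applies;
\item the $7/8$ zero-free boundary and partial summation;
\item the same endpoint/critical-point maximization of $s+x\mathcal B(s)$ that produces $\mathcal Q$.
\end{itemize}

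You leave uniformity in $m$ implicit. The strip exponent $a=\nu_m/b$ varies with $m$, and the paper controls the $a$-dependence of the density constant by rounding $a$ up to a finite mesh; you should add this step. Separately, your partial-summation integral should read $\int K^{\sigma}N_\zeta(\sigma;\cdot)\log K\,\dd\sigma$ rather than an integral against $\dd N$.
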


\begin{corollary}\label{cor:zeta}
Put $\beta_*=(78+24\sqrt3)/121$. For $0<\beta<1$, define
\begin{equation}\label{eq:zeta-threshold}
 d(\beta)=
 \begin{cases}
 \max\{17/30,1-\beta\},&0<\beta<13/20,\\[3pt]
 7/10-\beta/26,&13/20\le\beta\le169/200,\\[3pt]
 2+3\beta/2-2\sqrt{2\beta},&169/200\le\beta\le\beta_*,\\[3pt]
 2-5\beta/4-\dfrac{\sqrt3}{2}\sqrt{1-\beta},&\beta_*\le\beta<1.
 \end{cases}
\end{equation}
At $m=1$, estimate \eqref{eq:zeta-sum} gives a strict power saving
over $L$ exactly when $d(\beta)<\delta<1$.
In this range, uniformly for $1\le m\le K^R$,
\begin{equation}\label{eq:zeta-saving}
 \begin{aligned}
 S_\zeta(m\alpha,\beta;K,K^\delta)
 &\ll m^{1/2}K^{\delta-\omega(\beta,\delta)+\varepsilon},\\
 \omega(\beta,\delta)&=\delta+\max\{\beta/2,1-\delta\}\\
 &\quad-\mathcal Q\!\left(\max\{\beta+\delta-1,1-\delta,2\beta/3\}\right)>0.
 \end{aligned}
\end{equation}
Thus $m\le K^r$, with fixed $0\le r<2\omega$, preserves saving
$\omega-r/2>0$. The minimum threshold is $17/30$, for
$13/30\le\beta<13/20$.
\end{corollary}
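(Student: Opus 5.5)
The argument is pure bookkeeping on top of Theorem~\ref{thm:zeta-sum}; no new analytic input is needed. First take $m=1$, so $u=0$ and $\nu_1=\nu:=\max\{\beta+\delta-1,1-\delta,2\beta/3\}$. Since $\min\{K^{\delta-1},K^{-\beta/2}\}=K^{-\max\{1-\delta,\beta/2\}}$, estimate \eqref{eq:zeta-sum} reads
\[
S_\zeta(\alpha,\beta;K,K^\delta)\ll K^{\delta-\omega(\beta,\delta)+\varepsilon},
\qquad
\omega(\beta,\delta)=\delta+\max\{\beta/2,1-\delta\}-\mathcal Q(\nu).
\]
So \eqref{eq:zeta-sum} beats the trivial bound $L=K^\delta$ by a fixed power exactly when $\omega>0$. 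The first claim of the corollary is therefore the statement that, for fixed $\beta$, the set $\{\delta\in(1-\beta,1):\omega(\beta,\delta)>0\}$ is the interval $(d(\beta),1)$.

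To establish this, I would split the $(\beta,\delta)$ region into sub-regions. They are determined by three choices: which of the three terms attains $\nu$; which term attains $\max\{\beta/2,1-\delta\}$, with switch at $\delta=1-\beta/2$; and which branch of $\mathcal Q$ in \eqref{eq:q-function} is active, with breakpoints $91/230$, $169/300$ and $3/4$.

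In each sub-region, $\omega$ is explicit. It is piecewise linear in $\delta$, except on the branch $2+3x-2\sqrt{3x}$, where it is concave in $\delta$. I would check the following.
\begin{itemize}
\item $\omega$ is increasing in $\delta$ near the boundary. The slope of $\mathcal Q$ is $1/4$, $9/13$, then $3-\sqrt3/\sqrt x\le1$, then $1$; the relevant terms of $\nu$ move with slope $\pm1$ in $\delta$; and $\delta$ itself contributes slope $+1$.
\item One then solves $\omega=0$ piece by piece. For instance, take $\nu=1-\delta$ on the first branch, together with $\max=1-\delta$. This gives $1-7/8-(1-\delta)/4=0$, so $\delta=1/2$. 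That root is not binding, because the branch only applies for $1-\delta\le91/230$.
\end{itemize}
The genuine thresholds come from the crossings.
\begin{itemize}
\item $\nu=1-\delta$ on the second branch: $\delta+(1-\delta)-7/10-9(1-\delta)/13=0$ gives $\delta=17/30$. This is valid as long as $2\beta/3\le 13/30$ and $1-\beta<17/30$, i.e. $13/30\le\beta<13/20$. It explains both the $\max\{17/30,1-\beta\}$ entry and the statement about the minimum threshold.
\item $\nu=2\beta/3$: this yields the piece $7/10-\beta/26$.
\item The square-root branch with $\nu=2\beta/3$ or $\nu=\beta+\delta-1$: these yield $2+3\beta/2-2\sqrt{2\beta}$ and $2-5\beta/4-\tfrac{\sqrt3}{2}\sqrt{1-\beta}$.
\end{itemize}
The transition value $\beta_*$ is where the last two expressions agree; squaring gives a quadratic whose root is $(78+24\sqrt3)/121$. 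Verifying continuity at $13/20$, $169/200$ and $\beta_*$ is routine.

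For general $m$, I would compare $\mathcal Q(\nu_m)$ with $\mathcal Q(\nu)$. Each of the three terms of $\nu_m$ exceeds the corresponding term of $\nu$ by at most $u$, and $\mathcal Q$ is $1$-Lipschitz, so $\mathcal Q(\nu_m)\le\mathcal Q(\nu)+u$. I would treat two cases.
\begin{itemize}
\item If $\beta/2\ge1-\delta$, use the factor $m^{-1/2}K^{-\beta/2}$ in the minimum. The exponent is then at most $\mathcal Q(\nu)+u-u/2-\beta/2=\delta-\omega+u/2$, which is the claimed $m^{1/2}K^{\delta-\omega}$.
\item If $1-\delta>\beta/2$, I would interpolate, $\min\{A,B\}\le A^{1-t}B^{t}$, and choose $t$ so that the loss $u$ from $\mathcal Q$ is offset by $tu/2$ from $m^{-t/2}$, while the $K$-exponent stays at most $-(1-\delta)$ up to a term absorbed by $u/2$.
\end{itemize}
This second case is where I expect the main obstacle. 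A crude Lipschitz-$1$ bound gives only $m^{1}$ there. One must instead use that, when $1-\delta$ attains $\nu$, the term $1-\delta$ of $\nu_m$ does not move with $u$, and that the moving terms $\lambda+u$ and $2(\beta+u)/3$ only overtake it after $u$ is positive. I would first try to show directly that
\[
\mathcal Q(\nu_m)+\min\{\delta-1,-(\beta+u)/2\}\le\mathcal Q(\nu)-\max\{1-\delta,\beta/2\}+u/2,
\]
using the slope $1$ of the active branch together with the switch of the minimum at $u=2(1-\delta)-\beta$.

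The final sentence of the corollary is then immediate: for $m\le K^r$, the factor $m^{1/2}\le K^{r/2}$ leaves a saving of $\omega-r/2>0$ whenever $r<2\omega$.
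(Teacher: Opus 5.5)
Your $m=1$ analysis follows the paper's route: split at $\delta=1-\beta/2$, use monotonicity of $\omega$ in $\delta$, and solve $\omega=0$ branch by branch. Your case $\beta/2\ge1-\delta$ of the frequency assertion is also correct. The gap is the other case, $1-\delta>\beta/2$, which you flag but do not close. Put $w_0=1-\delta$, $b=\beta+u$, and $E(u)=\mathcal Q(\max\{b-w_0,w_0,2b/3\})-\max\{b/2,w_0\}$. The inequality you propose to ``show directly'' is exactly the goal $E(u)\le E(0)+u/2$, and neither tool you name proves it. Interpolating $\min\{A,B\}\le A^{1-t}B^t$ is never better than using the minimum itself. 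With only $\mathcal Q(\nu_m)\le\mathcal Q(\nu)+u$, it would require $u(1-t)/2+t(1-\delta-\beta/2)\le0$, which is false for every $t\in[0,1]$ once $u>0$. Nor does a slope bound of $1$ suffice. For $3w_0/2\le b\le2w_0$ the subtracted term is frozen at $w_0$, while $\nu_m=2b/3$ (because $b-w_0\le w_0\le2b/3$), so slope $1$ only gives $E'\le2/3$. If $\beta\ge3w_0/2$, for instance $\beta=\delta=3/5$, this regime starts at $u=0$, so there is no constant stretch to absorb the excess.

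The missing ingredient is a sharper slope bound in exactly this regime. The branch slopes of $\mathcal Q$ are $1/4$, $9/13$ and $3-\sqrt{3/x}$, so $\mathcal Q'\le3/4$ on $[0,16/27]$. In your second case the admissible range forces $\beta<13/20$ and $\delta>17/30$, since for $\beta\ge13/20$ one has $d(\beta)\ge1-\beta/2$. Hence $w_0<13/30$ and $2b/3\le4w_0/3<26/45<16/27$, so $E'\le(2/3)(3/4)=1/2$ on $3w_0/2\le b\le2w_0$. Add that $E$ is constant for $b\le3w_0/2$, and use your Lipschitz argument for $b\ge2w_0$, where the subtracted term is $b/2$. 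Together these give $E(u)\le E(0)+u/2$, which is how the paper argues.

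Two smaller corrections at $m=1$. First, $d$ is \emph{not} continuous at $13/20$: it jumps from $17/30$ to $27/40$. At $\beta=13/20$ one has $\mathcal Q(2\beta/3)=1$, so the range $\delta\le1-\beta/2$ supplies no strict saving. Your conditions should therefore read $2\beta/3<13/30$ and $1-\beta\le17/30$. Second, to get ``exactly when'' you need $\omega$ nondecreasing in $\delta$ on all of $(1-\beta,1)$, not just near the threshold. Your slope count does give this once it is organized by the two regimes $\delta\le1-\beta/2$ and $\delta\ge1-\beta/2$.
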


For example, the square-root phase admits every $17/30<\delta<1$.
The exact frequency bound in Theorem~\ref{thm:zeta-sum} gives
\begin{equation}\label{eq:short-frequency-example}
 S_\zeta(m\alpha,1/2;K,K^{3/5})\ll K^{15/26+\varepsilon},
 \qquad 1\le m\le K^{1/10},
\end{equation}
with no frequency loss in this range.

The tables give exponents $E$ with
$S_\pi(m\alpha,\beta;K,K^\delta)\ll m^{1/2}K^{E+\varepsilon}$.
The tabulated saving $\omega=\delta-E$ is for $m=1$; it decreases
by at most $r/2$ for $m\le K^r$. Sections~4--6 prove the ranges.
\begin{table}[H]
\centering
\caption{The zeta function, using the $7/8$ zero-free boundary.}\label{tab:zeta}
\begin{tabular}{@{}cccc@{}}
\toprule
$\beta$ & $\delta$ & $E$ & $\omega$\\
\midrule
$1/2$ & $3/5$ & $15/26$ & $3/130$\\
$1/2$ & $2/3$ & $5/8$ & $1/24$\\
$1/2$ & $4/5$ & $17/24$ & $11/120$\\
$2/3$ & $17/25$ & $263/390$ & $11/1950$\\
$8/9$ & $7/10$ & $2/3$ & $1/30$\\
\bottomrule
\end{tabular}
\end{table}

For $b>0$ and $b/2<v<b$, put $C=b+3v$ and
\begin{equation}\label{eq:f2}
 \mathcal F_2(b,v)=
 \begin{cases}
 3/4+C/5,&C\le25/16,\\[3pt]
 2+C-2\sqrt C,&C\ge25/16,\quad(b+2v)^2\le C,\\[4pt]
 \displaystyle v+\frac{b+v}{b+2v},
                         &(b+2v)^2\ge C.
 \end{cases}
\end{equation}
The formulas agree at the common boundaries on this domain.
Since $b+2v<4C/5$, the first condition already implies
$(b+2v)^2<C$.

\begin{theorem}[$\GL(2)$ cusp forms]\label{thm:gl2-sum}
Let $g$ be a fixed self-dual holomorphic or Maass Hecke cusp form
for $\Gamma_0(N)$, and $\chi$ a fixed real primitive character
modulo $Q$, with $N\mid Q$. Assume \eqref{eq:scales} and $\delta>1-\beta/2$, and fix $R>0$.
Uniformly for integers $1\le m\le K^R$, with $u=\log m/\log K$,
\begin{equation}\label{eq:gl2-sum}
 S_{g\otimes\chi}(m\alpha,\beta;K,K^\delta)
 \ll m^{-1/2}K^{\max\{1,\mathcal F_2(\beta+u,\lambda+u)\}-\beta/2+\varepsilon}.
\end{equation}
\end{theorem}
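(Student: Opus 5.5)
We follow the explicit-formula route used for the zeta case, with the density input replaced by the automorphic version of Theorem~\ref{thm:general-density}.

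\textbf{Step 1: explicit formula via Mellin inversion.} Write the smooth weight as a Mellin transform and shift contours in
\[
 S_{g\otimes\chi}(m\alpha,\beta;K,K^\delta)
 =\frac1{2\pi i}\int_{(2)}\Bigl(-\frac{L'}{L}(s,g\otimes\chi)\Bigr)
 W(s)\dd s,
\]
where $W(s)$ is the Mellin transform of $x\mapsto e(m\alpha x^\beta)\phi((x-K)/L)$. Since $g\otimes\chi$ is self-dual and cuspidal, there is no pole at $s=1$. Moving the line of integration gives, up to admissible error,
\[
 S\approx-\sum_\rho W(\rho)+O(K^{\varepsilon}).
\]
The sum runs over nontrivial zeros $\rho=\sigma+i\gamma$. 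Nothing here requires Ramanujan: the prime-power coefficients $a_\pi(n)$ only enter the absolutely convergent region, and the known bounds toward Ramanujan suffice there.

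\textbf{Step 2: transform estimate for $W(\rho)$.} Apply stationary phase to $W(\rho)$. The phase $m\alpha x^\beta-\frac{\gamma}{2\pi}\log x$ is stationary only when $|\gamma|\asymp mK^\beta=K^{b}$. The stationary spread of $\gamma$ over $x\in[K-L,K+L]$ has length about $K^{b-1}L=K^{v}$. Outside this window, repeated integration by parts gives rapid decay; the Fourier width $K/L$ is negligible compared with $K^v$, because $\delta>1-\beta/2$ forces $v>b/2$. On the window,
\[
 |W(\rho)|\ll K^{\sigma}(mK^{\beta})^{-1/2}=m^{-1/2}K^{\sigma-\beta/2}.
\]
This accounts for the factor $m^{-1/2}K^{-\beta/2}$ in \eqref{eq:gl2-sum}. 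The contribution is therefore
\[
 m^{-1/2}K^{-\beta/2}\int K^{\sigma}\,\dd N(\sigma;T,T+H),
 \qquad T\asymp K^{b},\ H\asymp K^{v}.
\]

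\textbf{Step 3: density input in a short strip.} Set $a=v/b$, which lies in $(1/2,1)$. Theorem~\ref{thm:general-density} requires temperedness, which fails for general Maass forms. I would instead invoke the Guth--Maynard large-value estimate with a $T^{\varepsilon}$ normalization that uses only Rankin--Selberg mean-square bounds for the coefficients, i.e.\ bounded on average. The moment input would be a short-interval second moment of $L(\frac12+it,g\otimes\chi)$:
\[
 \int_U^{U+U^a}\bigl|L(\tfrac12+it,g\otimes\chi)\bigr|^2\dd t\ll U^{\theta+\eta},
\]
taken from the Ye--Zhang framework, presumably with $\ell=1$ and $\theta$ determined by $a$ via subconvexity. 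Inserting this into \eqref{eq:general-density} yields two density regimes. The Guth--Maynard term $15a/(3+5\sigma)$, with $T^a=K^v$, produces the linear branch $3/4+C/5$ with $C=b+3v$. The moment term produces the branch $v+(b+v)/(b+2v)$ for $\sigma$ near $1$.

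\textbf{Step 4: optimization.} Integrate by parts in $\sigma$ against $K^{\sigma}$. The trivial count $N\ll K^{v+\varepsilon}$ near $\sigma=\frac12$ contributes the term $\max\{1,\cdot\}$, since $\frac12+v\le1$ cannot always be ensured but the trivial bound on $W$ gives at most $K^{1}$. Optimizing in $\sigma$ between the two density branches gives the middle case $2+C-2\sqrt C$, the maximum of $\sigma+(1-\sigma)D(\sigma)$. The boundary conditions $C=25/16$ and $(b+2v)^2=C$ mark where the maximizer crosses $\sigma=3/5$ or the crossover point of the two branches. Because no zero-free region beyond the standard one is available for $g\otimes\chi$, $\sigma$ ranges up to $1$. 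This is why the third branch, which is active near $\sigma=1$, appears.

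\textbf{Main obstacle.} The hardest step is Step 3: carrying Guth--Maynard through without temperedness. Their argument needs coefficient bounds for mollified Dirichlet polynomials. I would try dyadic decomposition, discarding $n$ with $|\lambda(n)|>K^{\varepsilon}$ via Rankin--Selberg mean squares and absorbing them into the large-values count. Failing that, I would restrict to the self-dual twist setting, where symmetric-square positivity controls the exceptional primes.
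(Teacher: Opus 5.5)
Your Steps~1--2 essentially agree with the paper. It also uses the Rudnick--Sarnak explicit formula and discards the gamma terms. It then applies Proposition~\ref{prop:frequency-reduction}, which for $\delta>1-\beta/2$ bounds the sum by $K^{-b/2}\sum K^{\sigma}$ over zeros with $\sigma\ge1/2$ and ordinates in a window of length $\asymp K^{v}$ at height $\asymp K^{b}$.

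The genuine gap is the density input in Step~3. The paper does not use Theorem~\ref{thm:general-density} or any Guth--Maynard argument for $\GL(2)$. It uses Ye--Zhang's short-strip bounds \cite[(2.6)--(2.7)]{YZ}, which need no Ramanujan hypothesis, capped by the local count $K^{v+\varepsilon}$. This gives $N_\pi(s;I)\ll K^{\mathcal D_2(s)+\varepsilon}$, with $\mathcal D_2(s)=\min\{v,\,C(1-s)/(2-s)\}$ for $1/2\le s<3/4$ and $\mathcal D_2(s)=D(1-s)/s$ for $3/4\le s<1$, where $C=b+3v$ and $D=b+v$.

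Your substitute cannot produce $\mathcal F_2$, even if the non-tempered Guth--Maynard step you flag were carried out. The Guth--Maynard exponent $15v(1-s)/(3+5s)$ depends only on the window length, so it cannot generate the height exponent $b$ inside $C$. At $s=3/4$ it gives $3/4+5v/9$, not $3/4+C/5$. With your choice $\ell=1$, the moment exponent has shape $(1-s)/(3-2s)$. But $2+C-2\sqrt C$ is precisely $\max_s\{s+C(1-s)/(2-s)\}$, attained at $s=2-\sqrt C$. So it requires the Ingham shape $C(1-s)/(2-s)$ with $C=b+3v$, which your input does not supply. The paper also remarks that, with the available moments, the Guth--Maynard argument does not improve the cusp-form density bounds even for holomorphic forms. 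So you cannot expect your envelope to dominate $\mathcal D_2$.

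As a result, the branch attributions in your Step~4 are off. With the correct envelope, $s+\mathcal D_2(s)=s+v$ increases on $[1/2,c]$. Here $c=(b+v)/(b+2v)\in(2/3,3/4)$ is the point where the local count meets the Ingham bound. On $[c,3/4)$ one maximizes the concave $G(s)=s+C(1-s)/(2-s)$ at $s_*=\min\{3/4,\max\{c,2-\sqrt C\}\}$.
\begin{itemize}
\item The threshold $C=25/16$ is where $2-\sqrt C$ reaches $3/4$, which is Ye--Zhang's band boundary, not $3/5$. This gives the branch $3/4+C/5$.
\item The condition $(b+2v)^2=C$ is where $2-\sqrt C$ reaches $c$.
\item The third branch $v+(b+v)/(b+2v)=c+v$ sits at $s=c<3/4$ and comes from the local count. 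It does not come from the lack of a zero-free region near $\sigma=1$.
\end{itemize}

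The entry $1$ in $\max\{1,\cdot\}$ arises from $s\to1^-$ on the upper band. There $s+D(1-s)/s$ is convex, with supremum $\max\{1,3/4+D/3\}$. You still need $3/4+D/3<3/4+C/5\le\mathcal F_2$, which follows from $C/5-D/3=(4v-2b)/15>0$, i.e.\ from $v>b/2$. Near $s=1/2$ the local count only yields $1/2+v<c+v$, so it is not the source of the $1$.
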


\begin{corollary}\label{cor:gl2}
Put $\beta_c=(\sqrt{17}-3)/2$. For $0<\beta<2/3$, define
\begin{equation}\label{eq:gl2-threshold}
 d_2(\beta)=
 \begin{cases}
 1-\beta/2,&0<\beta\le1/2,\\[3pt]
 3/8+3\beta/4,&1/2\le\beta\le11/20,\\[3pt]
 2-7\beta/4-\tfrac12\sqrt{3-5\beta},&11/20\le\beta\le\beta_c,\\[3pt]
 \displaystyle1-\beta+\frac{\beta^2}{2(1-\beta)},
                                      &\beta_c\le\beta<2/3.
 \end{cases}
\end{equation}
At $m=1$, Theorem~\ref{thm:gl2-sum} gives a strict power saving over $L$
exactly when $0<\beta<2/3$ and $d_2(\beta)<\delta<1$.
In this range, uniformly for $1\le m\le K^R$,
\begin{equation}\label{eq:gl2-saving}
 \begin{aligned}
 S_{g\otimes\chi}(m\alpha,\beta;K,K^\delta)
 &\ll m^{1/2}K^{\delta-\omega_2(\beta,\delta)+\varepsilon},\\
 \omega_2(\beta,\delta)&=\delta+\beta/2
                  -\max\{1,\mathcal F_2(\beta,\lambda)\}>0.
 \end{aligned}
\end{equation}
Thus $1\le m\le K^r$, with fixed $0\le r<2\omega_2$, gives saving $\omega_2-r/2>0$.
The minimum of $d_2(\beta)$ is $3/4$, attained at $\beta=1/2$.
\end{corollary}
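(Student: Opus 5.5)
The corollary is a bookkeeping consequence of Theorem~\ref{thm:gl2-sum}. It has three parts: an elementary ``diagonal Lipschitz'' property of $\mathcal F_2$, which handles the frequency dependence; a piecewise solution of the inequality $\omega_2>0$ in $\delta$, which gives the threshold $d_2(\beta)$; and a one-variable minimization.

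\emph{Frequency dependence.} Set $F(b,v)=\max\{1,\mathcal F_2(b,v)\}$. I will show that for $u\ge0$,
\[
F(b+u,v+u)\le F(b,v)+u
\]
on the domain $b/2<v<b$. This domain is preserved under the shift, since $v+u>(b+u)/2$ and $v+u<b+u$. Granting the inequality, the bound \eqref{eq:gl2-sum} gives
\[
S_{g\otimes\chi}(m\alpha,\beta;K,K^\delta)\ll m^{-1/2}K^{F(\beta,\lambda)+u-\beta/2+\varepsilon}=m^{1/2}K^{F(\beta,\lambda)-\beta/2+\varepsilon},
\]
which is \eqref{eq:gl2-saving}. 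The statement about $m\le K^r$ then follows at once.

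Since $\mathcal F_2$ is continuous and piecewise smooth (the formulas agree on the boundaries), it suffices to bound the derivative along the direction $(1,1)$ on each piece. Along this direction $C=b+3v$ grows at rate $4$.
\begin{itemize}
\item On the first piece, $\mathcal F_2=3/4+C/5$, so the directional derivative is $4/5$.
\item On the second piece, $\mathcal F_2=2+C-2\sqrt C$, so the derivative is $4(1-C^{-1/2})\le4/5$, because $C\ge25/16$.
\item On the third piece, $\mathcal F_2=v+(b+v)/(b+2v)$, and the derivative is
\[
1+\frac{2(b+2v)-3(b+v)}{(b+2v)^2}=1-\frac{b-v}{(b+2v)^2}<1,
\]
since $v<b$.
\end{itemize}
Taking the maximum with the constant $1$ preserves the bound.

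\emph{Threshold.} At $m=1$, a strict saving means $\omega_2>0$, that is, $\delta+\beta/2>1$ and $\delta+\beta/2>\mathcal F_2(\beta,\lambda)$. Here $b=\beta$, $v=\lambda=\beta+\delta-1$ and $C=4\beta+3\delta-3$. The first condition is the standing hypothesis $\delta>1-\beta/2$. I then solve the second condition on each piece.
\begin{itemize}
\item On the first piece, $3/4+C/5<\delta+\beta/2$ simplifies to $8\delta>3+6\beta$, which gives $\delta>3/8+3\beta/4$.
\item On the second piece, $2+C-2\sqrt C<\delta+\beta/2$ is a quadratic inequality in $\sqrt C$. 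Solving it should produce $2-7\beta/4-\tfrac12\sqrt{3-5\beta}$.
\item On the third piece, the condition becomes
\[
\frac{2\beta+\delta-1}{3\beta+2\delta-2}<1-\frac{\beta}{2}.
\]
Clearing the positive denominator makes this linear in $\delta$, and it should yield $1-\beta+\beta^2/(2(1-\beta))$.
\end{itemize}

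Two further checks are needed.
\begin{itemize}
\item I must show that $\delta+\beta/2-F(\beta,\lambda)$ is increasing in $\delta$, so that the saving set is an interval $(d_2(\beta),1)$. Its $\delta$-derivative is $1$ minus the $\partial_v$-derivative of $\mathcal F_2$. That derivative is at most $3/5$ on the first two pieces, and equals $1-b/(b+2v)^2<1$ on the third.
\item I must also locate which piece is active at $\delta=d_2(\beta)$. This is what produces the breakpoints $1/2$, $11/20$ and $\beta_c$, the last being a root of $\beta^2+3\beta-2=0$.
\end{itemize}

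\emph{Endpoint and minimum.} The restriction $\beta<2/3$ comes from the endpoint $\delta\to1$. There $v\to b=\beta$, the third piece applies, and $\mathcal F_2\to\beta+2/3$. This is less than $1+\beta/2$ exactly when $\beta<2/3$. Finally, $d_2$ is decreasing on $(0,1/2]$ and increasing afterwards, with value $3/4$ at $\beta=1/2$ from both the first and the second formula. I will verify the monotonicity on the remaining two pieces by differentiation.

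\emph{Main obstacle.} I expect the hardest step to be the case analysis identifying the active piece of $\mathcal F_2$ along the line $\delta=d_2(\beta)$ and matching the breakpoints exactly, including the equality between the $\mathcal F_2$ branch and the constant $1$ at $\beta=1/2$. I would handle it by evaluating $C$ and $(b+2v)^2-C$ along each candidate curve.
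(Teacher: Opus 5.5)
Your treatment of the threshold follows the paper's proof: solve equality branch by branch, use monotonicity in $\lambda$ to get an interval, and read off the breakpoints. \textbf{Gap:} the derivative bounds you give on the middle branch of $\mathcal F_2$ are wrong, and both the frequency step and the interval structure rest on them. You assert $4(1-C^{-1/2})\le4/5$ and $3(1-C^{-1/2})\le3/5$ ``because $C\ge25/16$''. That hypothesis gives the reverse: $C^{-1/2}\le4/5$, so these derivatives are at least $4/5$ and $3/5$. You need an \emph{upper} bound on $C$, and it comes from the branch's other defining condition, $(b+2v)^2\le C$. Since $2-c=C/(b+2v)$ with $c=(b+v)/(b+2v)$, that condition is equivalent to $\sqrt C\le2-c$. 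Also $v<b$ gives $c>2/3$, hence $\sqrt C<4/3$. This yields $4(1-C^{-1/2})<1$ and $3(1-C^{-1/2})<3/4$; it is the remark $\sqrt C\le2-c<4/3$ in the paper's proof. The bound is not automatic. In the frequency step $b=\beta+u$ ranges up to $\beta+R$, and nothing you wrote excludes $C>16/9$ on the middle branch, where the directional derivative exceeds $1$. Even at $m=1$ one only knows $C<4\beta$ a priori, while $\partial_v\mathcal F_2<1$ there needs $C<9/4$.

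With this repair the argument is correct, and two steps differ from the paper. For uniformity in $m$, the paper does not differentiate $\mathcal F_2$ along $(1,1)$. Instead it compares density envelopes pointwise, $\mathcal D_2(s;\beta+u,\lambda+u)\le\mathcal D_2(s;\beta,\lambda)+u$. Below $s=2/3$ the local count governs, the lower-band slope $4(1-s)/(2-s)$ is at most $1$ for $s\ge2/3$, and the upper-band slope $2(1-s)/s$ is at most $2/3$. It then takes the weighted supremum. This needs no branch analysis and treats $\GL(2)$ and $\GL(3)$ together. Your directional derivative is the same computation seen through the envelope theorem: on the middle branch it equals $4(1-s)/(2-s)$ at the maximizer $s_*=2-\sqrt C$, so the missing bound is exactly $s_*\ge c>2/3$. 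For $\beta\ge2/3$, the paper uses the direct inequality $\mathcal F_2\ge G(c)=c+\lambda$, giving $\omega_2\le\lambda/(\beta+2\lambda)-\beta/2<1/3-\beta/2\le0$. You instead use monotonicity and the limit $\delta\to1$. That works, but the limiting value $\beta+2/3$ is the third-branch value, whose condition at $v=b$ reads $9\beta^2\ge4\beta$. So your ``exactly when $\beta<2/3$'' is justified only for $\beta\ge4/9$. This is harmless, since you need the endpoint only to exclude $\beta\ge2/3$. The two computations you left as ``should'' do go through. On the middle branch, equality is $(2\sqrt C-3)^2=3-5\beta$, taken with the root $\sqrt C<3/2$. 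On the third branch, the condition reduces to $\lambda(1-\beta)>\beta^2/2$.
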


For example, when $\beta=1/2$, Theorem~\ref{thm:gl2-sum}
gives a power saving at $m=1$ for every $3/4<\delta<1$.
More explicitly, $S_{g\otimes\chi}(m\alpha,1/2;K,K^\delta)
\ll m^{1/2}K^{3/10+3\delta/5+\varepsilon}$ if $3/4<\delta\le4/5$.

\begin{table}[H]
\centering
\caption{Twists of $\GL(2)$ cusp forms.}\label{tab:gl2}
\begin{tabular}{@{}cccc@{}}
\toprule
$\beta$ & $\delta$ & $E$ & $\omega$\\
\midrule
$2/5$ & $17/20$ & $4/5$ & $1/20$\\
$1/2$ & $4/5$ & $39/50$ & $1/50$\\
$3/5$ & $9/10$ & $71/80$ & $1/80$\\
\bottomrule
\end{tabular}
\end{table}

\begin{theorem}[Self-dual $\GL(3)$ cusp forms]\label{thm:gl3-sum}
Let $f$ be a fixed self-dual Hecke--Maass cusp form for $\SL_3(\Z)$.
Assume \eqref{eq:scales} and $\delta>1-\beta/2$, and fix $R>0$. Uniformly for integers
$1\le m\le K^R$, with $u=\log m/\log K$,
\begin{equation}\label{eq:gl3-sum}
 \begin{aligned}
 S_f(m\alpha,\beta;K,K^\delta)
 &\ll m^{-1/2}K^{1-\beta/2+\varepsilon}\\
 &\quad +m^{1/2}K^{\delta+\beta/2-
               (\lambda+u)/(2(\beta+\lambda+2u))+\varepsilon}.
 \end{aligned}
\end{equation}
\end{theorem}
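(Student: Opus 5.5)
We follow the explicit-formula route of Ye--Zhang \cite{YZ}, with $L(s,f)$ in place of zeta. The steps are: write $S_f$ as a sum over zeros of the Mellin transform of the weight, bound each such transform by stationary phase, split the zeros according to the stationary point, and bound the zero counts with Theorem~\ref{thm:general-density}.

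\textbf{Step 1 (explicit formula).} Put $w(x)=e(m\alpha x^\beta)\phi((x-K)/L)$. The weight $w$ is smooth and supported in $[K-L,K+L]$. Since $f$ is cuspidal, $L(s,f)$ is entire and there is no main term, so Mellin inversion and contour shifting give
\[
 S_f(m\alpha,\beta;K,L)=-\sum_\rho \widetilde w(\rho)+O(K^{\varepsilon}),
\]
where $\rho$ runs over the nontrivial zeros of $L(s,f)$. The contributions of trivial zeros and the error from truncating the sum are negligible.

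\textbf{Step 2 (transform estimate).} We have $\widetilde w(\rho)=\int w(x)x^{\rho-1}\dd x$, with phase $2\pi m\alpha x^\beta+\gamma\log x$.
\begin{itemize}
\item A stationary point lies in the support only when $|\gamma|$ lies in a window of centre $\asymp mK^\beta$ and width $\asymp mK^{\beta-1}L=K^{v}$. Note $mK^\beta=K^{b}$.
\item Inside this window the second-derivative test gives $|\widetilde w(\rho)|\ll K^{\Re\rho}(mK^\beta)^{-1/2}$.
\item Outside it, repeated integration by parts gives
\[
 \widetilde w(\rho)\ll K^{\Re\rho-1}L\,(1+\mathrm{dist}(|\gamma|,\text{window})\,L/K)^{-A}.
\]
\end{itemize}
So only zeros with $T:=|\gamma|\asymp K^b$ and $\gamma$ in an interval of length $H=K^{v}=T^{a}$, with $a=v/b\in(1/2,1)$, contribute, up to the negligible non-stationary tail.

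\textbf{Step 3 (summing over zeros).} Integrate by parts in $\sigma$ and split the contributing zeros by real part:
\[
 \sum_\rho K^{\Re\rho}\ll K^{1/2}N(T,T+H)+\int_{1/2}^{1}K^{\sigma}\log K\,N_f(\sigma;T,T+H)\dd\sigma .
\]
Zeros with $\Re\rho\le1/2$ (by the functional equation) together with the Riemann--von Mangoldt count $N(T,T+H)\ll H\log T$ give
\[
 m^{-1/2}K^{-\beta/2}\cdot K^{1/2}K^{v}.
\]
This is absorbed as follows. The trivial bound $K^{\sigma}$ near $\sigma=1$ yields the first term $m^{-1/2}K^{1-\beta/2}$. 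The remaining range needs a density bound of the shape
\[
 N_f(\sigma;T,T+T^a)\ll (T^{a})^{c(1-\sigma)}\cdot(\text{something}),
\]
designed so that optimizing $K^{\sigma}N_f(\sigma)$ gives the second term.

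\textbf{Step 4 (density bound, the main obstacle).} For self-dual $\GL(3)$ the available input is a short-interval second moment of $L(1/2+it,f)$, obtained from Lindelöf-on-average type results for self-dual forms. Temperedness is not known here, so Theorem~\ref{thm:general-density} cannot be invoked directly. I would use the weaker classical zero-detector route (Montgomery's mean value, which needs no Ramanujan bound) with $\ell=1$, $\theta=a$. This should give a linear-in-$(1-\sigma)$ density of the form
\[
 N_f(\sigma;T,T+H)\ll H^{\kappa(1-\sigma)}T^{\varepsilon}, \qquad\kappa\approx 1/(1-\sigma)\text{-type, i.e. } H^{2(1-\sigma)/\sigma}\text{-type}.
\]
Inserting it, the $\sigma$-integrand becomes
\[
 m^{-1/2}K^{-\beta/2}K^{\sigma}K^{v\cdot c(1-\sigma)}.
\]
It remains to optimize over $\sigma$, balancing against the $K^{1-\beta/2}$ endpoint. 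Expressed in $b=\beta+u$ and $v=\lambda+u$, the optimum should produce the exponent
\[
 \delta+\beta/2-\frac{v}{2(b+v)}
\]
with the $m^{1/2}$ factor, matching \eqref{eq:gl3-sum}.

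The hardest part is getting the moment/density input for $\GL(3)$ without Ramanujan. I would first try the mean-value theorem using only Rankin--Selberg bounds on $\sum|\lambda_f(n)|^2$, and check that the resulting density exponent is exactly what reproduces the stated term $v/(2(b+v))$. If it does not, I would adjust the choice of the zero-detecting mollifier length.
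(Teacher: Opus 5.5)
Your Steps 1--3 follow the paper's skeleton. These are the Rudnick--Sarnak explicit formula, localization of the zero sum to a window of length $\asymp K^{v}$ at height $\asymp K^{b}$ with transform bound $K^{\sigma-b/2}$ there, and partial summation in $\sigma$. The localization is Proposition~\ref{prop:frequency-reduction}. With only your factor $(1+\operatorname{dist}\cdot L/K)^{-A}$ you must first enlarge the window by $K^{1-\delta+\eta}$, which is harmless since $1-\delta<b/2<v$.

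The gap is Step~4, which carries the entire content of the exponent; there you do not supply a usable density estimate.

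First, a bound depending only on $H$ and $\sigma$ cannot give \eqref{eq:gl3-sum}. After weighting by $K^{s}$ it yields an exponent of the form $F(v)-b/2$, whereas the target $1+v-b/2-v/(2(b+v))$ depends on the height exponent $b$ through $v/(2(b+v))$. For instance, your $H^{2(1-\sigma)/\sigma}$, capped by the local count, gives $\max\{2/3+v,1\}-b/2$.

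Second, the input you propose, $\ell=1$ and $\theta=a$, means $\int_T^{T+T^a}|L(\tfrac12+it,f)|^2\dd t\ll T^{a+\varepsilon}$. That is Lindel\"of on average in short intervals for $\GL(3)$, which is not known; it would imply $L(\tfrac12+iT,f)\ll T^{a/2+\varepsilon}$.

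Third, a density shape ``designed'' to reproduce the answer is not a proof. No choice of mollifier length can create the missing dependence on $T$; that dependence must come from the size of the moment input.

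The paper does not prove a density theorem here at all. It imports Ye--Zhang's short-strip bounds for self-dual $\GL(3)$ \cite[(2.10)--(2.11)]{YZ}, which need no Ramanujan hypothesis. Set $C=2b+4v$, $D=2(b+v)$ and $c=1-v/D\in(3/4,5/6)$. These bounds give $N_f(s;I)\ll K^{C(1-s)/(3-2s)+\varepsilon}$ for $1/2\le s<c$, and $N_f(s;I)\ll K^{D(1-s)+\varepsilon}$ for $c\le s<1$.

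The upper-band bound is $(TH)^{2(1-\sigma)}$ with $T=K^b$, $H=K^v$. This is the shape the classical detector--mean-value argument produces from a second moment of size $(TH)^{1+\varepsilon}$, i.e.\ $\theta=1+a$.

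The lower-band bound is superseded by the local count $N_f(s;I)\ll K^{v+\varepsilon}$, because $C(1-s)/(3-2s)-v=D(c-s)/(3-2s)\ge0$. So the effective envelope is $\min\{v,D(1-s)\}$. You use the local count only for the boundary term at $\sigma=1/2$; it must cap $N_f(s;I)$ for every $s$.

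Then $\sup_{1/2\le s<1}\{s+\min\{v,D(1-s)\}\}=\max\{c+v,1\}$. It is attained at the kink $s=c$ when $D\ge1$, and as $s\to1$ when $D\le1$, not by balancing against the endpoint term. Multiply by $K^{-b/2}$ and use $c+v-b/2=\delta+\beta/2+u/2-v/(2(b+v))$ with $b+v=\beta+\lambda+2u$. This gives exactly the two terms of \eqref{eq:gl3-sum}.
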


\begin{corollary}\label{cor:gl3}
For $0<\beta<1/2$, put
\begin{equation}\label{eq:gl3-threshold}
 d_3(\beta)=
 \begin{cases}
 1-\beta/2,&0<\beta\le1/3,\\[3pt]
 \displaystyle1-\beta+\frac{\beta^2}{1-\beta},&1/3\le\beta<1/2.
 \end{cases}
\end{equation}
At $m=1$, Theorem~\ref{thm:gl3-sum} gives a strict power saving over $L$
exactly when $0<\beta<1/2$ and $d_3(\beta)<\delta<1$.
In this range, uniformly for $1\le m\le K^R$,
\begin{equation}\label{eq:gl3-saving}
 \begin{aligned}
 S_f(m\alpha,\beta;K,K^\delta)
 &\ll m^{1/2}K^{\delta-\omega_3(\beta,\delta)+\varepsilon},\\
 \omega_3(\beta,\delta)&=\min\left\{
 \lambda-\frac\beta2,\ \frac{\lambda}{2(\beta+\lambda)}-\frac\beta2
                         \right\}>0.
 \end{aligned}
\end{equation}
Thus $1\le m\le K^r$, with fixed $0\le r<2\omega_3$, gives saving $\omega_3-r/2>0$.
The minimum of $d_3(\beta)$ is $5/6$, attained at $\beta=1/3$.
\end{corollary}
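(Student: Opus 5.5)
The plan is to read off all the claims directly from Theorem~\ref{thm:gl3-sum}, first at $m=1$ and then for general $m$, using one monotonicity observation in $u$. Throughout, the hypotheses \eqref{eq:scales} and $\delta>1-\beta/2$ are in force, and $\lambda=\beta+\delta-1$.

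\textbf{The case $m=1$.} Here $u=0$, and \eqref{eq:gl3-sum} reads
\[
S_f(\alpha,\beta;K,K^\delta)\ll K^{1-\beta/2+\varepsilon}+K^{\delta+\beta/2-\lambda/(2(\beta+\lambda))+\varepsilon}.
\]
Measured against $L=K^\delta$, the first term saves $\delta-1+\beta/2=\lambda-\beta/2$. The second term saves $\lambda/(2(\beta+\lambda))-\beta/2$. The smaller of the two is exactly $\omega_3(\beta,\delta)$.

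\textbf{When the saving is positive.} Since both exponents are explicit, the estimate gives a strict saving if and only if both savings are positive.
\begin{itemize}
\item The first saving is positive iff $\delta>1-\beta/2$.
\item The second is positive iff $\lambda>\beta(\beta+\lambda)$, i.e.\ $\lambda(1-\beta)>\beta^2$, i.e.\ $\delta>1-\beta+\beta^2/(1-\beta)$.
\end{itemize}
Conversely, if either inequality fails, that term alone is $\gg$ the trivial size in exponent, so no saving follows. This gives the ``exactly when'' assertion.

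\textbf{Why $\beta<1/2$ is needed.} The constraint $\delta<1$ means $\lambda<\beta$. Combined with $\lambda>\beta^2/(1-\beta)$, this requires $\beta/(1-\beta)<1$, i.e.\ $\beta<1/2$. For $\beta\ge1/2$ no admissible $\delta$ exists.

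\textbf{Identifying $d_3$.} Compare the two thresholds: $\beta^2/(1-\beta)\ge\beta/2$ exactly when $2\beta\ge1-\beta$, i.e.\ $\beta\ge1/3$. So the binding threshold is $1-\beta/2$ for $\beta\le1/3$ and $1-\beta+\beta^2/(1-\beta)$ for $1/3\le\beta<1/2$. This is $d_3(\beta)$ in \eqref{eq:gl3-threshold}.

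\textbf{The minimum of $d_3$.}
\begin{itemize}
\item On $(0,1/3]$, $1-\beta/2$ decreases to $5/6$.
\item On $[1/3,1/2)$, the derivative of $1-\beta+\beta^2/(1-\beta)$ is $(-1+4\beta-2\beta^2)/(1-\beta)^2$. At $\beta=1/3$ this equals $(1/9)/(1-\beta)^2>0$. The numerator is a downward parabola that is still positive at $\beta=1/2$, so the function increases on this interval.
\end{itemize}
Hence the minimum is $5/6$, attained at $\beta=1/3$.

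\textbf{Uniformity in $m$.} This is the only step needing a small argument.
\begin{itemize}
\item For the first term, $m^{-1/2}K^{1-\beta/2}\le m^{1/2}K^{1-\beta/2}\le m^{1/2}K^{\delta-\omega_3}$.
\item For the second term, write $h(x)=(\lambda+x)/(\beta+\lambda+2x)$ for $x\ge0$. The sign of $h'(x)$ is that of $(\beta+\lambda+2x)-2(\lambda+x)=\beta-\lambda$, which is positive since $\lambda<\beta$. So $h$ is increasing, and $h(u)\ge h(0)$ for $u\ge0$. Therefore
\[
m^{1/2}K^{\delta+\beta/2-h(u)/2}\le m^{1/2}K^{\delta+\beta/2-\lambda/(2(\beta+\lambda))}\le m^{1/2}K^{\delta-\omega_3}.
\]
\end{itemize}
This gives \eqref{eq:gl3-saving}. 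For $m\le K^r$ the factor $m^{1/2}$ costs at most $K^{r/2}$, so the saving is at least $\omega_3-r/2$.

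The only point needing care is this monotonicity of $h$ in $u$, which depends on $\lambda<\beta$, i.e.\ on $\delta<1$. Everything else is elementary algebra.
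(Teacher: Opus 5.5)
Your proof is correct. The $m=1$ analysis matches the paper's. Both arguments take the two entries of $\omega_3$ as the savings of the two terms in \eqref{eq:gl3-sum}, reduce positivity to $\lambda>\max\{\beta/2,\beta^2/(1-\beta)\}$, locate the crossing at $\beta=1/3$, and substitute $\delta=1-\beta+\lambda$.

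The paper excludes $\beta\ge1/2$ via $\lambda/(2(\beta+\lambda))<1/4\le\beta/2$. You instead combine $\beta^2/(1-\beta)<\lambda<\beta$. At $\beta=1$ your division by $1-\beta$ is unavailable, but there $\lambda(1-\beta)>\beta^2$ fails outright, so nothing is lost.

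Where you genuinely differ is the uniformity in $m$. The paper does not use the closed form of \eqref{eq:gl3-sum} for this step. It returns to the density envelope $\mathcal D_3(s;b,v)=\min\{v,2(b+v)(1-s)\}$ and shows $\mathcal D_3(s;\beta+u,\lambda+u)\le\mathcal D_3(s;\beta,\lambda)+u$. Below the old crossing $c>3/4$ it uses the local-count entry; above it, the slope $4(1-s)<1$. It then combines the weighted supremum with the factor $K^{-u/2}$ from the transform bound.

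You read the $m$-dependence directly off \eqref{eq:gl3-sum}. The first term carries $m^{-1/2}$. In the second term, $h(x)=(\lambda+x)/(\beta+\lambda+2x)$ is increasing because $\lambda<\beta$.

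Your route is shorter and slightly sharper: the second term actually loses only $m^{1/2}K^{-(h(u)-h(0))/2}$. The paper's envelope comparison has the advantage of handling Corollaries~\ref{cor:gl2} and \ref{cor:gl3} with one argument. That matters for $\GL(2)$, where $\mathcal F_2$ is piecewise and a direct check of its growth along $(\beta+u,\lambda+u)$ would be more cumbersome.
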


For example, when $\beta=1/3$, Theorem~\ref{thm:gl3-sum}
gives a power saving at $m=1$ for every $5/6<\delta<1$.
More explicitly,
\[
 S_f(m\alpha,1/3;K,K^\delta)\ll m^{1/2}K^{\delta-\omega_3+\varepsilon},
 \qquad \omega_3=\frac{\delta-5/6}{3\delta-1}>0.
\]

\begin{table}[H]
\centering
\caption{Self-dual $\GL(3)$ cusp forms.}\label{tab:gl3}
\begin{tabular}{@{}cccc@{}}
\toprule
$\beta$ & $\delta$ & $E$ & $\omega$\\
\midrule
$1/4$ & $9/10$ & $7/8$ & $1/40$\\
$1/3$ & $8/9$ & $77/90$ & $1/30$\\
$2/5$ & $9/10$ & $31/35$ & $1/70$\\
\bottomrule
\end{tabular}
\end{table}

Neither Theorem~\ref{thm:gl2-sum} nor Theorem~\ref{thm:gl3-sum}
assumes Ramanujan. These savings are relative to $L$ itself.
The explicit formula of Rudnick--Sarnak \cite{RS} reduces the
problem to zeros at height $\asymp K^b$. For zeta we retain
both the stationary spread $K^v$ and the Fourier width $K^{1-\delta}$;
for the cusp forms the former dominates. We use Ye--Zhang's short-strip density
bounds \cite[Theorems~2.2--2.3]{YZ} for the cusp forms; their
$\GL(2)$ input uses the moments of Ye \cite{YeMoment} and
Lau--Liu--Ye \cite{LLY}.

Related sums over all integers, with normalized Fourier coefficients
and without $\Lambda(n)$, exhibit resonance: square-root phases
can produce $X^{3/4}$ main terms for fixed $\GL(2)$ forms
\cite{ILS,RY10,SW}. Ren--Ye developed the $\GL(3)$ and higher-rank
theory \cite{RY14,RY15GL3,RY15GLm,RY16}; Ye \cite{Ye22} studied
bounds toward Hypothesis S as the form varies. Spectral moments
and related bounds were obtained by Salazar--Ye \cite{SY} and
Gillespie--Samanta--Ye \cite{GSY}, while Gillespie \cite{Gillespie}
treated quadratic base change. See also McKee--Sun--Ye's weighted
stationary phase and its applications \cite{MSY,MSYsub}, and the
survey \cite{YeRIMS}.

For ordinary primes, these consequences should be distinguished
from earlier direct exponential-sum estimates. Taylor approximation
and the polynomial-phase estimate of Matom\"aki, Shao, Tao,
and Ter\"av\"ainen \cite[Corollary~1.3(ii)]{MSTT} imply equidistribution of
$\{\alpha p^\beta\}$ on every interval $(X,X+X^\delta]$ when
$0<\beta<1$ and $\delta>\max\{5/8,1-\beta\}$.
The quantitative Type I and II estimates of Matom\"aki--Shao
\cite[Propositions~2.1--2.2]{MS} also yield some power saving
for $\delta>\max\{2/3,1-\beta/2\}$. Our frequency bounds also imply
such equidistribution, with a power-saving discrepancy in the range
of Corollary~\ref{cor:zeta}, by Erd\H{o}s--Tur\'an and smooth approximation.
In particular, the present bounds apply for $0<\beta<13/20$ and
$\delta>\max\{17/30,1-\beta\}$, using the new zero-free half-plane
in addition to localized density. The quoted direct estimates and
our bounds have different inputs; we focus on the transfer from
short-strip density to oscillatory prime sums. No assertion of an
optimal discrepancy exponent is made here.

Section~\ref{sec:density} proves the zero-density theorems.
Section~\ref{sec:explicit} establishes the explicit formula and the
transform estimates.
Sections~4, 5, and~6 apply these results to zeta, cusp forms for
$\Gamma_0(N)$, and Maass forms for $\SL_3(\Z)$, respectively.

\section{Zero density on short strips}\label{sec:density}

We prove Theorems~\ref{thm:general-density} and \ref{thm:zeta-density}.
Throughout the proof of the former, $H=T^a$, $3/5\le\sigma<1$,
and all $L$-function data are fixed. Choose $X=T^\kappa$, with
$\kappa>0$ sufficiently small in terms of the prescribed final
$\varepsilon$. The detector cutoff $Y$ will satisfy
$T^{c_0}\le Y\le T^{C_0}$ for fixed $C_0\ge c_0>\kappa$ and
will be chosen at the end.

Let $A_\pi(n)$ and $\mu_\pi(n)$ be the Dirichlet coefficients of
$L(s,\pi)$ and $L(s,\pi)^{-1}$, respectively, grouped by norm if
$F\ne\Q$. Put
\begin{equation}\label{eq:detector-coefficients}
 M_X(s)=\sum_{n\le X}\mu_\pi(n)n^{-s},\qquad
 b_X(n)=\sum_{\substack{d\mid n\\d\le X}}
                      \mu_\pi(d)A_\pi(n/d).
\end{equation}
Temperedness at finite places gives
$|A_\pi(n)|+|\mu_\pi(n)|\ll_\nu n^\nu$ for every $\nu>0$.
Thus $b_X(1)=1$, $b_X(n)=0$ for $2\le n\le X$, and
$b_X(n)\ll_\nu n^\nu$, uniformly in $X$. The same bound holds
for each fixed convolution power of truncated coefficients
$b_X(n)e^{-n/Y}$: a fixed divisor function majorizes the coefficients.
After $\kappa$ and the powering indices have been fixed, we choose
the coefficient exponents and all further small losses sufficiently
small to absorb their total cost into $T^\varepsilon$.

For a zero $\rho=u+it$, $u>\sigma$ and $T\le t\le T+H$,
Mellin inversion and a contour shift give
\begin{align}
 e^{-1/Y}+\sum_{n>X}b_X(n)n^{-\rho}e^{-n/Y}
       &=J(\rho)+O(T^{-100}),                         \label{eq:detector}\\
 J(\rho)&=\frac1{2\pi}\int_\R
 L(\tfrac12+i(t+v),\pi)M_X(\tfrac12+i(t+v))
 \notag\\[-2pt]
 &\hspace{4em}\times
 \Gamma(\tfrac12-u+iv)Y^{1/2-u+iv}\dd v.           \label{eq:detector-integral}
\end{align}
This is the detector used in \cite[Sections~4--5]{YZ}.
The residue at the origin vanishes because $L(\rho,\pi)=0$;
any fixed pole of $L(s,\pi)$ contributes an exponentially small
term by Stirling's formula. We may truncate the sum at
$Y(\log T)^2$ and the integral at $|v|\le V:=(\log T)^2$,
with error $O(T^{-100})$. Every such zero therefore satisfies
at least one of
\begin{equation}\label{eq:zero-types}
 \text{Type I: }\quad
 \left|\sum_{X<n\le Y(\log T)^2}b_X(n)n^{-\rho}e^{-n/Y}\right|
       \ge\tfrac14,
 \qquad
 \text{Type II: }\quad |J(\rho)|\ge\tfrac14.
\end{equation}
Let $R_{\mathrm I}$ and $R_{\mathrm{II}}$ count these zeros, with
multiplicity. Then $N_\pi(\sigma;T,T+H)\le R_{\mathrm I}+R_{\mathrm{II}}$.
We use the standard local bound
\begin{equation}\label{eq:density-local-count}
 \#\{\rho:v\le\Im\rho\le v+1\}\ll_\pi\log T
 \qquad(v\asymp T).
\end{equation}

\begin{lemma}[Type II zeros]\label{lem:type-II}
Under the moment hypothesis \eqref{eq:moment-hypothesis},
\begin{equation}\label{eq:type-II-bound}
 R_{\mathrm{II}}\ll T^{\theta+\varepsilon}
                          Y^{-\ell(2\sigma-1)}.
\end{equation}
\end{lemma}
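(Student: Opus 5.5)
We bound $R_{\mathrm{II}}$ by a moment computation: raise the Type II condition to the $2\ell$-th power, pull out the decay in $Y$, and sum over well-spaced zeros.

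\emph{Pointwise bound.} For a Type II zero $\rho=u+it$ we have $u>\sigma$ and $|v|\le V=(\log T)^2$. Since $\Re(\tfrac12-u)$ is bounded away from $0$ (we have $u\ge\sigma\ge3/5$), Stirling gives $|\Gamma(\tfrac12-u+iv)|\ll1$ on this range. Also $|Y^{1/2-u+iv}|=Y^{1/2-u}\le Y^{1/2-\sigma}$. Hence
\[
 \tfrac14\le|J(\rho)|\ll Y^{1/2-\sigma}\int_{|v|\le V}
 |L(\tfrac12+i(t+v),\pi)M_X(\tfrac12+i(t+v))|\dd v .
\]
Raising to the power $2\ell$ and applying H\"older over the interval of length $2V$ gives
\[
 1\ll Y^{-\ell(2\sigma-1)}V^{2\ell-1}\int_{|v|\le V}
 |LM_X|^{2\ell}(\tfrac12+i(t+v))\dd v .
\]

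\emph{Summing over zeros.} Sum this over all Type II zeros. By \eqref{eq:density-local-count}, each point $t+v$ is covered by at most $O(V\log T)$ pairs $(\rho,v)$, so
\[
 R_{\mathrm{II}}\ll Y^{-\ell(2\sigma-1)}(\log T)^{O(\ell)}
 \int_{T-V}^{T+H+V}|L(\tfrac12+it,\pi)|^{2\ell}|M_X(\tfrac12+it)|^{2\ell}\dd t .
\]
The trivial bound $|M_X(\tfrac12+it)|\ll X^{1/2+\nu}=T^{\kappa(1/2+\nu)}$ holds because $\mu_\pi(n)\ll n^\nu$. We then apply \eqref{eq:moment-hypothesis} with $U=T-V$, enlarging to $U+U^a$ up to a $(\log T)^2$ overlap via two such windows. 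This gives
\[
 R_{\mathrm{II}}\ll T^{\theta+\eta+\ell\kappa(1+2\nu)}Y^{-\ell(2\sigma-1)} .
\]
Since $\kappa$ is chosen small in terms of $\varepsilon$ (and $\ell$ is fixed), this is \eqref{eq:type-II-bound}.

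\emph{Main obstacle.} The delicate point is whether the crude loss $X^{\ell}$ from $M_X$ is acceptable. It is, precisely because $X=T^\kappa$ with $\kappa$ tiny. This is the reason the detector uses a short mollifier, while the Type I analysis carries the real work.

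The remaining checks are routine:
\begin{itemize}
\item the window $[T-V,T+H+V]$ is covered by $O(1)$ intervals of the form $[U,U+U^a]$ with $U\asymp T$;
\item the Gamma factor stays uniformly bounded, since $u<1$ keeps $\tfrac12-u+iv$ away from poles (and if $u$ could approach $1$ with the pole of $\Gamma$ at $0$ irrelevant, we only need $\Re(\tfrac12-u)\in[-\tfrac12,-\tfrac1{10}]$);
\item multiplicity is handled by counting zeros with multiplicity in \eqref{eq:density-local-count}.
\end{itemize}
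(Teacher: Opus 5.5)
Your proposal is correct and follows the paper's proof essentially step for step. The shared steps are: the bounded gamma factor together with $Y^{1/2-u}\le Y^{1/2-\sigma}$, H\"older over the window of length $2V$, the $O(V\log T)$ overlap coming from the local zero count, the trivial bound $M_X(\tfrac12+it)\ll X^{1/2+\nu}$, and covering $[T-V,T+H+V]$ by $O(1)$ windows $[U,U+U^a]$ with $U\asymp T$ before taking $\kappa,\eta,\nu$ small.
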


\begin{proof}
We localize the integral argument of
\cite[Lemma~6.3 and Appendix~C]{MP}. The gamma factor in
\eqref{eq:detector-integral} is uniformly bounded for
$3/5\le u\le1$ and decays exponentially in $|v|$. Hence a Type II
zero satisfies
\[
 Y^{\sigma-1/2}\ll\int_{-V}^{V}
       |L(\tfrac12+i(t+v),\pi)M_X(\tfrac12+i(t+v))|\dd v.
\]
Raise this to the power $2\ell$ and apply H\"older.
By \eqref{eq:density-local-count}, the intervals $[t-V,t+V]$
overlap at most $O(V\log T)$ times, including multiplicities.
Summing therefore gives
\begin{equation}\label{eq:type-II-raw}
 R_{\mathrm{II}}Y^{\ell(2\sigma-1)}
 \ll(\log T)^{O_\ell(1)}
 \int_{T-V}^{T+H+V}|L(\tfrac12+iv,\pi)M_X(\tfrac12+iv)|^{2\ell}\dd v.
\end{equation}
Since $V=o(H)$, this interval is covered by $O(1)$ intervals
$[U,U+U^a]$, $U\asymp T$. The uniform moment hypothesis and
$M_X(1/2+iv)\ll_\nu X^{1/2+\nu}$ bound the right side by
$T^{\theta+\eta}X^{\ell+2\ell\nu}(\log T)^{O_\ell(1)}$.
Choose $\kappa$, $\eta$, and $\nu$ sufficiently small to obtain
\eqref{eq:type-II-bound}. This uses only the assumed untwisted
moment on the enlarged short interval.
\end{proof}

\begin{lemma}[Type I zeros]\label{lem:type-I}
Without a moment hypothesis,
\begin{equation}\label{eq:type-I-bound}
 R_{\mathrm I}\ll T^\varepsilon
 \left(H^{15(1-\sigma)/(3+5\sigma)}+Y^{4(1-\sigma)}\right).
\end{equation}
\end{lemma}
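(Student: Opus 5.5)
We bound $R_{\mathrm I}$ by a large-values estimate for Dirichlet polynomials, applied to well-spaced ordinates in $[T,T+H]$. The proof has four steps.

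\emph{Step 1: reduction to a dyadic polynomial.} Split the range $X<n\le Y(\log T)^2$ into $O(\log T)$ dyadic blocks $N<n\le2N$. A Type~I zero then satisfies
\[
\Bigl|\sum_{N<n\le2N}b_X(n)n^{-\rho}e^{-n/Y}\Bigr|\gg(\log T)^{-1}
\]
for some block. Remove the real part $u>\sigma$ of $\rho$ by partial summation in $n$; the factor $n^{-u}\le N^{-\sigma}$ and the smooth weight cost only $\log$ factors. Normalize the coefficients by $T^{\varepsilon}$ using $b_X(n)\ll_\nu n^\nu$. We obtain a Dirichlet polynomial $D_N(it)=\sum_{N<n\le2N}c_n n^{-it}$ with $|c_n|\le1$ and $|D_N(it)|\ge V:=N^{\sigma}T^{-\varepsilon}$. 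By \eqref{eq:density-local-count}, it costs only $O(\log T)$ to pass to a $1$-separated subset of ordinates $t_r\in[T,T+H]$.

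\emph{Step 2: powering.} Raise to a power $k$ so that $N^k$ lands in a convenient range, with $N^k\asymp\max\{H^{c},\cdot\}$ chosen as in \cite{YZ} and \cite{GM}. Write $D_N^k=\sum_{n\asymp N^k}c'_n n^{-it}$. As noted above, the coefficients $c'_n$ are still $\ll n^{\nu}$ because a fixed divisor function majorizes them. The large-value condition becomes $|D_N^k|\ge N^{k\sigma}T^{-O(\varepsilon)}$. Here $\sigma$ is unchanged and the length is $N^k$.

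\emph{Step 3: large-values estimate on a short interval.} Apply Guth--Maynard's large-value estimate \cite{GM} in its form for $R$ well-spaced points inside an interval of length $H$ (not $T$). Their bound for a polynomial of length $M$ with values $\ge M^{\sigma}$ is
\[
R\ll T^{\varepsilon}\bigl(M^{2-2\sigma}+M^{18/5-4\sigma}+H\,M^{15/4-5\sigma}\cdots\bigr),
\]
and the argument depends on the time-length only through $H$. This yields a localized bound. Combined with the classical mean-value bound $R\ll(M+H)M^{1-2\sigma}$ and Huxley-type bounds, optimizing over the power $k$ gives $H^{15(1-\sigma)/(3+5\sigma)}$ whenever $N\le Y$ is large enough to be powered into the favorable range. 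For short polynomials with $N$ comparable to $Y$ but $Y^{\,\cdot}$ small relative to $H$, the mean-value theorem applied after powering to length $\asymp Y^2$ gives the term $Y^{4(1-\sigma)}$. Concretely, take $k$ with $N^k\asymp Y^2$; then $R\ll(Y^2+H)Y^{2(1-2\sigma)}$, and the $H$-part is absorbed into the first term by the choice made in Step~2.

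\emph{Main obstacle.} The delicate point is Step~3: verifying that the Guth--Maynard estimate, stated for $t\in[0,T]$, localizes to an interval of length $H=T^a$ with $T$ replaced by $H$. This requires bounded coefficients, which temperedness supplies, and it requires the estimate to be translation-invariant in $t$. Translation invariance holds because shifting $t$ by $T$ only twists the coefficients by $n^{-iT}$, which preserves $|c_n|\le1$. I would first check that every input of \cite{GM}'s proof (the mean-value bounds, the additive energy of the large-value set, and the Heath-Brown bound) depends only on the length of the interval containing the $t_r$. After that, the exponent $15/(3+5\sigma)$ follows from their optimization with $T$ replaced by $H$, and the range condition $\sigma\ge3/5$ ensures that the powered length can be chosen within the window where their bound applies.
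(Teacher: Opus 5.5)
Your outline (dyadic blocks, moving to $\Re s=\sigma$, normalizing, powering, then Guth--Maynard plus the mean-value bound on an interval of length $O(H)$) matches the paper's. The gap is that the proposal stops where the lemma's content begins. ``Optimizing over the power $k$ gives $H^{15(1-\sigma)/(3+5\sigma)}$'' is asserted, not shown, and the estimate you would optimize is misquoted. With $|c_n|\le1$ and values $\ge N^\sigma$, \cite[Theorem~1.1]{GM} gives $R\ll T^\varepsilon(N^{2(1-\sigma)}+N^{18/5-4\sigma}+HN^{12/5-4\sigma})$; there is no term $HN^{15/4-5\sigma}$. The exponent $15/(3+5\sigma)$ comes from this third term together with the fact that bounded integer powers only place the length in a window. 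Here is the missing argument.
\begin{itemize}
\item[(i)] Fix one auxiliary length $Z\ge Y^2$. Every dyadic $M\in[X/2,Y(\log T)^2]$ has a bounded power in $[Z^{2/3},Z(\log T)^4]$. Take the least $k$ with $M^k\ge Z^{2/3}$ if $M\le Z^{1/3}$, and $k=2$ otherwise; the latter is where $Z\ge Y^2$ is used.
\item[(ii)] On this whole window, show $R\ll T^\varepsilon(Z^{2(1-\sigma)}+HZ^{q_\sigma})$ with $q_\sigma=\frac23(\frac{12}5-4\sigma)$. For $\sigma\ge4/5$, Guth--Maynard alone does it. For $3/5\le\sigma\le4/5$, use Guth--Maynard when $N\le Z^{v_\sigma}$, where $v_\sigma=5(1-\sigma)/(9-10\sigma)$ satisfies $v_\sigma(18/5-4\sigma)=2(1-\sigma)$. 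Use the mean-value bound when $N>Z^{v_\sigma}$; this works because $q_\sigma-v_\sigma(1-2\sigma)>0$.
\item[(iii)] Choose $Z=\max\{H^{15/(2(3+5\sigma))},Y^2\}$. Since $2(1-\sigma)-q_\sigma=2(3+5\sigma)/15$, this gives $HZ^{q_\sigma}\le Z^{2(1-\sigma)}$, and hence the stated bound.
\end{itemize}

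The one concrete step you do give fails as stated. You cannot take $N^k\asymp Y^2$ for every dyadic $N$, and across a window neither estimate suffices alone. At $\sigma=3/4$ and $Z=H^{10/9}$, the mean-value term $HN^{1-2\sigma}$ at $N=Z^{2/3}$ is $H^{17/27}$, above the target $H^{15/27}$. Meanwhile Guth--Maynard's middle term $N^{3/5}$ exceeds $Z^{1/2}$ near $N=Z$. Also, $HY^{2-4\sigma}$ is absorbed by $Y^{4(1-\sigma)}$ only when $Y^2\ge H$. The $Y^{4(1-\sigma)}$ term really comes from the constraint $Z\ge Y^2$ forced by the longest blocks, not from a separate class of polynomials.

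Two smaller points. First, partial summation in $n$ leaves zero-dependent partial sums $\sum_{N<n\le N'}$, not a single polynomial. You need a smooth device such as the paper's Fourier inversion of $\chi_0(v)e^{-qv}$, which moves each evaluation to $\Re s=\sigma$ at a point within $T^\eta$ of the zero. The $1$-separation then costs $T^\eta\log T$ rather than $O(\log T)$. Second, the ``main obstacle'' you name is not one. Guth--Maynard's theorem holds for points in $[0,T]$ with arbitrary $T$, and translating by $T_0$ only multiplies $c_n$ by $n^{-iT_0}$. So their bound applies with $T$ replaced by $O(H)$ without revisiting their proof.
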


\begin{proof}
Dyadic subdivision assigns each Type I zero to a polynomial
$D_M(s)=\sum_{M<n\le2M}d_n n^{-s}$, where
$d_n=b_X(n)e^{-n/Y}$ on the detector range and is zero otherwise,
$X/2\le M\le Y(\log T)^2$, and
$|D_M(u+it)|\gg1/\log T$. There are $O(\log T)$ choices of $M$.
We first move all evaluations to the line $\Re s=\sigma$, as in
\cite[Section~13.1]{GM}, keeping track of their separation.
For $q=u-\sigma\in[0,2/5]$, take a fixed smooth compactly
supported cutoff $\chi_0=1$ on $[0,\log2]$, and put
$f_q(v)=\chi_0(v)e^{-qv}$. With
$\widehat f_q(\xi)=\int f_q(v)e^{-i\xi v}\dd v$, Fourier inversion gives
\begin{equation}\label{eq:move-real-part}
 D_M(u+it)=\frac{M^{-q}}{2\pi}
 \int_\R\widehat f_q(\xi)M^{-i\xi}
                D_M(\sigma+i(t-\xi))\dd\xi.
\end{equation}
The transforms have uniformly bounded $L^1$ norms and rapid decay.
For any sufficiently small fixed $\eta>0$, truncation to
$|\xi|\le T^\eta$ has error $O(T^{-100})$. Since $M^{-q}\le1$,
there is an evaluation point $t_\rho$ with
$|t_\rho-t|\le T^\eta$ and
$|D_M(\sigma+it_\rho)|\gg1/\log T$.
These points need not be zero ordinates. By
\eqref{eq:density-local-count}, each interval of length $2$
contains at most $O(T^\eta\log T)$ of them, counted with
multiplicity. Greedy selection recovers a $1$-separated set at
that cost. Taking $\eta<a/2$, all selected points lie in an
interval of length $O(H)$. Translation and reflection of this
interval preserve coefficient absolute values, so the height
parameter in the large-value estimates is $O(H)$.

Introduce an independent length $Z\ge Y^2$, bounded by a fixed
power of $T$. For each $M$, choose a bounded positive integer $k$
such that
\begin{equation}\label{eq:powering-window}
 Z^{2/3}\le M^k\le Z(\log T)^4.
\end{equation}
If $M\le Z^{1/3}$, take the least $k$ with $M^k\ge Z^{2/3}$;
otherwise take $k=2$, using $M\le Y(\log T)^2$.
The bound $M\gg T^\kappa$ makes $k$ bounded independently of $T$.
Raise $D_M$ to this power, split its support dyadically, and group
the points by $M,k$, and the resulting block. If $e_n$ denotes
its convolution coefficients, multiply the block by $N^\sigma$
and divide by $C=\max(1,\max_n|e_n|)\le T^{\eta/2}$.
Thus $c_n=C^{-1}(N/n)^\sigma e_n$ defines a single polynomial
$P_N(v)=\sum_{N<n\le2N}c_n n^{-iv}$ for the group, with
\begin{equation}\label{eq:normalized-polynomial}
 |c_n|\le1,\qquad |P_N(t_\rho)|\ge N^\sigma T^{-\eta},\qquad
 Z^{2/3}\ll N\ll Z(\log T)^4.
\end{equation}
Here $\eta$ may be made arbitrarily small after $k$ is fixed.
The number of groups and all dyadic losses are logarithmic;
all extraction and coefficient-normalization costs are absorbed
into $T^\varepsilon$ below.

For a group of $R$ separated points, Guth--Maynard
\cite[Theorem~1.1]{GM} and the mean-value estimate
\cite[Lemma~3.1]{YZ}, respectively, give
\begin{align}
 R&\ll T^\varepsilon\bigl(N^{2(1-\sigma)}
              +N^{18/5-4\sigma}+HN^{12/5-4\sigma}\bigr),
                                                    \label{eq:GM-short}\\
 R&\ll T^\varepsilon\bigl(N^{2(1-\sigma)}+HN^{1-2\sigma}\bigr).
                                                    \label{eq:MV-short}
\end{align}
For $N\ge H$, \eqref{eq:MV-short} alone is bounded by its first
term. Put $q_\sigma=8/5-8\sigma/3\le0$. We claim that
\begin{equation}\label{eq:Z-bound}
 R_{\mathrm I}\ll T^\varepsilon
                   \bigl(Z^{2(1-\sigma)}+HZ^{q_\sigma}\bigr).
\end{equation}
For $\sigma\ge4/5$, the middle term of \eqref{eq:GM-short} is
at most the first, and \eqref{eq:normalized-polynomial} bounds
the first and last terms by the two terms of \eqref{eq:Z-bound}.
For $3/5\le\sigma\le4/5$, set
$v_\sigma=5(1-\sigma)/(9-10\sigma)\in[2/3,1]$.
Use \eqref{eq:GM-short} when $N\le Z^{v_\sigma}$; its middle
term is then at most $Z^{2(1-\sigma)}$.
When $N>Z^{v_\sigma}$, use \eqref{eq:MV-short} and the identity
\begin{equation}\label{eq:exponent-comparison}
 q_\sigma-v_\sigma(1-2\sigma)
 =\frac{250(\sigma-3/4)^2+3/8}{15(9-10\sigma)}>0.
\end{equation}
This proves \eqref{eq:Z-bound}, with logarithmic endpoint factors
absorbed into $T^\varepsilon$. Finally choose
\[
 Z=\max\{H^{15/(2(3+5\sigma))},Y^2\}.
\]
Since $2(1-\sigma)-q_\sigma=2(3+5\sigma)/15$,
we have $HZ^{q_\sigma}\le Z^{2(1-\sigma)}$, and
\eqref{eq:type-I-bound} follows.
\end{proof}

\begin{proof}[Proof of Theorem~\ref{thm:general-density}]
Set $b=\ell(2\sigma-1)$ and $c=4(1-\sigma)$.
Lemmas~\ref{lem:type-II} and \ref{lem:type-I} give
\begin{equation}\label{eq:combine-types}
 N_\pi(\sigma;T,T+T^a)
 \ll T^\varepsilon\left(
 T^{15a(1-\sigma)/(3+5\sigma)}+Y^c+T^\theta Y^{-b}\right).
\end{equation}
Take $Y=T^{\theta/(b+c)}$, so the last two terms are both
$T^{c\theta/(b+c)}$. Here $Y$ lies between fixed positive powers
of $T$, uniformly for $3/5\le\sigma<1$, so $X<Y$ after choosing
$\kappa$ sufficiently small. Allocating the earlier small losses
within the prescribed $\varepsilon$ proves
\eqref{eq:general-density}.
\end{proof}

\begin{proof}[Proof of Theorem~\ref{thm:zeta-density}]
The fourth-moment estimate of Heath-Brown \cite{HB} and Iwaniec
\cite{Iw}, as recorded in \cite[(2.1)]{YZ}, is
\begin{equation}\label{eq:zeta-fourth-moment}
 \int_T^{T+T^a}|\zeta(\tfrac12+it)|^4\dd t
          \ll_{a,\eta}T^{a+\eta},\qquad 2/3\le a\le1.
\end{equation}
For $a=1$, this also follows by covering $[T,2T]$ with
$O(T^{1/3})$ intervals of length comparable to $T^{2/3}$.
Theorem~\ref{thm:general-density}, with $\ell=2$ and $\theta=a$,
therefore gives, for $3/5\le\sigma<1$,
\begin{equation}\label{eq:zeta-direct-density}
 N_\zeta(\sigma;T,T+T^a)
 \ll T^{a(1-\sigma)\max\{15/(3+5\sigma),\,2\}+\varepsilon}.
\end{equation}
Ye--Zhang \cite[(2.2)--(2.4)]{YZ} give exponents
$3a(1-\sigma)/(2-\sigma)$ for $1/2\le\sigma\le3/4$,
$3a(1-\sigma)/(3\sigma-1)$ for $3/4<\sigma\le5/6$, and
$2a(1-\sigma)$ for $5/6<\sigma<1$.
For $a=1$, the same bounds follow from their Theorems~1.1 and~1.3
and \eqref{eq:zeta-fourth-moment}.
The comparisons
\[
 \frac{15}{3+5\sigma}\le\frac3{2-\sigma}
       \iff \sigma\ge\frac7{10},\qquad
 \frac{15}{3+5\sigma}\le\frac3{3\sigma-1}
       \iff \sigma\le\frac45
\]
show that taking the best of these estimates gives exactly
\eqref{eq:zeta-density}. The Type II entry $2$ does not govern
\eqref{eq:zeta-direct-density} on $[7/10,4/5]$; the four branches
agree at their common endpoints. The improvement is strict
precisely on $7/10<\sigma<4/5$.
\end{proof}

\section{The explicit formula}\label{sec:explicit}

Let $\pi$ denote zeta or one of the fixed cusp forms in
Theorems~\ref{thm:gl2-sum}--\ref{thm:gl3-sum}. Assume
\eqref{eq:scales} and take $K$ large enough that $L\le K/2$.
Set
\begin{equation}\label{eq:transform}
 \begin{aligned}
 w(x)&=e(\alpha x^\beta)\phi\!\left(\frac{x-K}{L}\right),
 &G(u)&=e^{u/2}w(e^u),\\
 h(z)&=\int_\R G(u)e^{izu}\dd u
      =\int_0^\infty w(x)x^{iz-1/2}\dd x.
 \end{aligned}
\end{equation}
Then $G\in C_c^\infty((0,\infty))$, $G(0)=0$, and $h$ is entire.
For a nontrivial zero write
\begin{equation}\label{eq:zero-notation}
 \rho=\sigma+it=\tfrac12+i\gamma,\qquad
 \gamma=t+i(1/2-\sigma),\qquad
 h(\gamma)=\int_0^\infty w(x)x^{\sigma-1+it}\dd x.
\end{equation}
No Riemann hypothesis is assumed.

Write the archimedean factor as
$\prod_{j=1}^{d_\pi}\Gamma_\R(s+\mu_j)$, where
$\Gamma_\R(s)=\pi^{-s/2}\Gamma(s/2)$ and $\Re\mu_j>-1/2$.
Put $D=\Gamma_\R'/\Gamma_\R$ and let $Q_\pi$ be the conductor.
Rudnick--Sarnak \cite[Proposition~2.1]{RS} gives
\begin{equation}\label{eq:explicit}
 S_\pi=-\sum_\rho h\!\left(\frac{\rho-1/2}{i}\right)
       +r_\pi\{h(-i/2)+h(i/2)\}+\mathcal A_\pi(G),
\end{equation}
where $r_\zeta=1$, $r_\pi=0$ for cusp forms, and
\begin{equation}\label{eq:arch-definition}
 \mathcal A_\pi(G)=\frac1{2\pi}\int_\R h(v)
 \left[\log Q_\pi+\sum_{j=1}^{d_\pi}
 \left\{D(\tfrac12+\mu_j+iv)
       +D(\tfrac12+\overline{\mu_j}-iv)\right\}\right]\dd v.
\end{equation}
The dual prime sum vanishes since $G(-\log n)=0$; the conductor
term equals $(\log Q_\pi)G(0)=0$. Zeros are counted with multiplicity.

The oscillation parameter below is
\begin{equation}\label{eq:tau}
 \tau=|\alpha|\beta LK^{\beta-1}=|\alpha|\beta K^\lambda;
\end{equation}
$\lambda$ continues to denote only the exponent in \eqref{eq:scales}.

\begin{proposition}[Pole and gamma terms]\label{prop:harmless}
For every $A>0$, with $\mu_*:=\min_j\Re\mu_j$,
\begin{align}
 |h(-i/2)|+K|h(i/2)|&\ll_A L(1+\tau)^{-A},
                                                \label{eq:pole-decay}\\
 \mathcal A_\pi(G)
 &=-\sum_{j=1}^{d_\pi}\int_{K-L}^{K+L}
          w(x)\frac{x^{-1-\mu_j}}{1-x^{-2}}\dd x
 \notag\\[-2pt]
 &\ll_A LK^{-1-\mu_*}(1+\tau)^{-A}.              \label{eq:gamma-decay}
\end{align}
Consequently, for every $B>0$,
\begin{equation}\label{eq:zero-reduction}
 S_\pi=-\sum_\rho h\!\left(\frac{\rho-1/2}{i}\right)+O_B(K^{-B}).
\end{equation}
\end{proposition}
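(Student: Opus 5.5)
The plan is to make everything explicit in the $x$-variable and then use repeated integration by parts against the phase $e(\alpha x^\beta)$ on the short support $[K-L,K+L]$.

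\emph{Pole terms.} From \eqref{eq:transform}, $h(-i/2)=\int w(x)\dd x$ and $h(i/2)=\int w(x)x^{-1}\dd x$. On the support, $x\asymp K$ and the phase derivative is $2\pi\alpha\beta x^{\beta-1}\asymp |\alpha|\beta K^{\beta-1}$, so integrating by parts once multiplies the size by
\[
 \frac{1}{L|\alpha|\beta K^{\beta-1}}\asymp\tau^{-1}.
\]
Each further derivative of $\phi((x-K)/L)$ costs $L^{-1}$, and each derivative of $x^{\beta-1}$ or $x^{-1}$ costs $K^{-1}\le L^{-1}$. Hence after $A$ steps we get $\ll_A L\tau^{-A}$. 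When $\tau\le1$ the trivial bound $L$ suffices, so in all cases $|h(-i/2)|\ll_A L(1+\tau)^{-A}$. The same argument, with the extra factor $x^{-1}\asymp K^{-1}$, gives $K|h(i/2)|\ll_A L(1+\tau)^{-A}$. This proves \eqref{eq:pole-decay}.

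\emph{Gamma term.} Here I would identify $\mathcal A_\pi(G)$ with an explicit $x$-integral.
\begin{itemize}
\item[(i)] Since $G$ is smooth and compactly supported, $h$ decays rapidly on horizontal lines. The archimedean term in \eqref{eq:arch-definition} is therefore $\frac1{2\pi}\int h(v)\Psi(v)\dd v$, with $\Psi$ the bracketed logarithmic derivative.
\item[(ii)] Insert the standard series, valid for $\Re z>0$,
\[
 \frac{\Gamma_\R'}{\Gamma_\R}(z)=-\tfrac12\log\pi+\tfrac12\psi(z/2),\qquad
 \psi(w)=-\gamma_E+\sum_{k\ge0}\Bigl(\frac1{k+1}-\frac1{k+w}\Bigr).
\]
Alternatively, use the Weil-form identity that Rudnick--Sarnak use: the archimedean term equals a $u$-integral of $G(u)$ against $e^{-u(1/2+\mu_j)}/(1-e^{-2u})$, plus a $\delta$-mass at $u=0$.
\item[(iii)] Since $G(0)=0$, and in fact $G$ vanishes near $u\le0$ (its support is in $u\approx\log K$), the $\delta$ terms and constants vanish. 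Only the $u>0$ kernel survives; the term with $\overline{\mu_j}$ and $-iv$ pairs with the negative half-line, where $G=0$.
\item[(iv)] Substituting $x=e^u$, $G(u)=x^{1/2}w(x)$, and $\dd u=\dd x/x$, the kernel $e^{-u(1/2+\mu_j)}/(1-e^{-2u})$ becomes $x^{-1/2-\mu_j}/(1-x^{-2})$. Combined with $x^{1/2}w(x)\cdot x^{-1}$, this gives the displayed integral of $w(x)x^{-1-\mu_j}/(1-x^{-2})$ with the sign stated.
\end{itemize}
I would verify the sign and normalization by checking the zeta case against the classical trivial-zero contribution $\sum_k w(x)x^{-2k-1}$, i.e. $\sum_{k\ge1}x^{-2k}=x^{-2}/(1-x^{-2})$ paired with $\mu=0$ and the shift. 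The bound in \eqref{eq:gamma-decay} then follows from the same integration by parts, because the amplitude $x^{-1-\mu_j}(1-x^{-2})^{-1}$ is smooth of size $K^{-1-\Re\mu_j}$ with derivatives gaining $K^{-1}$.

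\emph{Consequence.} For \eqref{eq:zero-reduction} we need $\tau\gg K^\lambda$ with $\lambda>0$ fixed, by \eqref{eq:scales}. Then $L(1+\tau)^{-A}\ll K^{1-A\lambda}$, and choosing $A$ large yields $O_B(K^{-B})$. Since $\Re\mu_j>-1/2$, the gamma term is even smaller.

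The main obstacle is step (ii)–(iv): pinning down exactly how \eqref{eq:arch-definition} converts to the kernel $x^{-1-\mu_j}/(1-x^{-2})$, with correct signs and no leftover constant. I would do this by shifting the $v$-contour downward (using that $h$ is entire and $G$ is supported in $u>0$) and collecting residues at the poles of $D(\frac12+\mu_j+iv)$, namely $\frac12+\mu_j+iv=-2k$. These residues sum to a geometric series in $x^{-2}$.
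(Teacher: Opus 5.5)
Your proposal is correct and follows the paper's route. For the pole integrals and the $x$-form of the gamma term you use non-stationary phase on $[K-L,K+L]$. Your repeated integration by parts, gaining $(LK^{\beta-1})^{-1}\asymp\tau^{-1}$ per step, is what the paper gets by rescaling $x=K+Lz$ and quoting the McKee--Sun--Ye first-derivative test. For $\mathcal A_\pi(G)$ you expand $\psi$ and apply Fourier inversion. Your series for $\psi$ is equivalent to the integral representation the paper uses, and in either form the key identity is
\[
 \frac1{2\pi}\int_\R h(v)\,(c\pm iv)^{-1}\dd v=\int_0^\infty G(\pm u)e^{-cu}\dd u\qquad(\Re c>0),
\]
applied with $c=2k+\tfrac12+\mu_j$. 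The minus sign kills the $\overline{\mu_j}$ terms because $G(-u)=0$. Summing the geometric series in $k$ then gives the stated kernel with the stated sign.

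One correction to the contour-shift version you describe at the end: the shift must go \emph{upward}, not downward.
\begin{itemize}
\item The poles of $D(\tfrac12+\mu_j+iv)$ sit at $v_k=i(2k+\tfrac12+\mu_j)$, which has positive imaginary part since $\Re\mu_j>-\tfrac12$.
\item Because $G$ is supported near $u=\log K>0$, you have $|h(v)|\le\int|G(u)|e^{-u\Im v}\dd u$. This decays like $(K-L)^{-\Im v}$ above the real axis, but below it you only get the bound $(K+L)^{|\Im v|}$.
\item A downward shift therefore crosses none of these poles and leaves an integral you cannot discard.
\item Shifting upward instead, along lines between consecutive poles where $D$ is $O(\log(2+|v|))$, each pole contributes residue $i\,h(v_k)$. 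This gives
\[
 \frac1{2\pi}\int_\R h(v)D(\tfrac12+\mu_j+iv)\dd v=-\sum_{k\ge0}h\bigl(i(2k+\tfrac12+\mu_j)\bigr),
\]
which agrees with the series route.
\item The $\overline{\mu_j}$ term has its poles in the lower half-plane, so it vanishes under the same upward shift.
\end{itemize}

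In your zeta sanity check, note that the kernel $x^{-1}/(1-x^{-2})$ includes $k=0$. That term cancels $r_\zeta h(i/2)=\int w(x)x^{-1}\dd x$, leaving the classical trivial-zero contribution $-\sum_{k\ge1}\int w(x)x^{-2k-1}\dd x$.
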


\begin{proof}
Put $F(z)=\alpha(K+Lz)^\beta$ on $[-1,1]$. Its first derivative
is nonzero, its first two derivatives do not change sign, and
\[
 \min|F'|\asymp\tau,\qquad
 |F^{(j)}(z)|\ll_j\tau(L/K)^{j-1}\quad(j\ge2).
\]
For an amplitude $a\in C_c^\infty((-1,1))$ with
$a^{(j)}\ll_j U$, apply McKee--Sun--Ye
\cite[Theorem~1.1]{MSY}, for any integer $n\ge2$, with their
parameters $M=N=2$, $T=\tau$.
All boundary terms vanish, and their error is
$\ll_n U\sum_{j=0}^n\tau^j/(\min|F'|)^{n+j+1}
\ll_n U\tau^{-n-1}$. The trivial bound for $\tau<1$ therefore gives
\begin{equation}\label{eq:first-derivative}
 \int_{-1}^1 a(z)e(F(z))\dd z\ll_A U(1+\tau)^{-A}.
\end{equation}
Complex amplitudes are handled by their real and imaginary parts;
for $\beta=1$, $F''=0$ is allowed. Substituting $x=K+Lz$ in
$h(-i/2)=\int w(x)\dd x$ and $h(i/2)=\int w(x)x^{-1}\dd x$,
use $a=\phi$, $U=1$, and $a=\phi/(K+Lz)$, $U=K^{-1}$,
respectively. This proves \eqref{eq:pole-decay} directly for the
complete pole integrals.

For the gamma terms, the digamma representation
\cite[5.9.16]{DLMF}, together with Fourier inversion, gives for
$\Re a>0$
\begin{equation}\label{eq:gamma-transform}
 \frac1{2\pi}\int_\R h(v)D(a\pm iv)\dd v
 =-\int_0^\infty\frac{G(\pm u)e^{-au}}{1-e^{-2u}}\dd u.
\end{equation}
Indeed, writing $\psi=\Gamma'/\Gamma$ and $\gamma_E$ for Euler's constant,
insert $D(s)=-\tfrac12\log\pi+\tfrac12\psi(s/2)$ and
\[
 \psi(z)=-\gamma_E+
      \int_0^\infty\frac{e^{-v}-e^{-zv}}{1-e^{-v}}\dd v.
\]
The constant terms vanish by $G(0)=0$, and $v=2u$ yields
\eqref{eq:gamma-transform}. The interchange is justified before
splitting the numerator: near $v=0$ its quotient is $O_a(1+|t|)$
when $z=(a\pm it)/2$, and for large $v$ it decays exponentially;
$h(t)$ is rapidly decreasing. Since $G(-u)=0$, the term
$D(1/2+\overline{\mu_j}-iv)$ contributes zero.
For the other term take $a=1/2+\mu_j$ and substitute $x=e^u$.
This proves the identity in \eqref{eq:gamma-decay}. Its rescaled
amplitudes and all their fixed-order derivatives are
$O(K^{-1-\Re\mu_j})$, so \eqref{eq:first-derivative} proves its bound.
Finally, $\tau\asymp K^\lambda$ with $\lambda>0$; increasing $A$
absorbs the fixed powers of $K$ and proves \eqref{eq:zero-reduction}.
\end{proof}

We next localize the zero sum, allowing the curvature parameter to
be small. Define
\begin{equation}\label{eq:stationary-interval}
 \begin{gathered}
 I=\{-2\pi\alpha\beta x^\beta:K-L\le x\le K+L\},\\
 r=L/K,\qquad P=K^\beta r^2.
 \end{gathered}
\end{equation}
The absolute height of $I$ is $\asymp K^\beta$ and its length is
$\asymp K^\lambda$. Neither transform estimate below requires $P\to\infty$.

\begin{proposition}[Transform bounds]\label{prop:localization}
Uniformly for $0\le\sigma\le1$ and real $t$,
\begin{equation}\label{eq:second-derivative}
 |h(t+i(1/2-\sigma))|
 \ll\min\{LK^{\sigma-1},K^{\sigma-\beta/2}\}.
\end{equation}
For $t\notin I$ and every $A>0$,
\begin{equation}\label{eq:off-window}
 |h(t+i(1/2-\sigma))|
 \ll_A LK^{\sigma-1}
       (1+P+r\operatorname{dist}(t,I))^{-A}.
\end{equation}
\end{proposition}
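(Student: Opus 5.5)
The plan is to reduce both estimates to oscillatory integrals over the rescaled window. Substituting $x=K+Lz$ in \eqref{eq:zero-notation} gives
\[
 h(t+i(\tfrac12-\sigma))=L\int_{-1}^{1}\phi(z)(K+Lz)^{\sigma-1}e^{i\Phi(z)}\dd z,
 \qquad
 \Phi(z)=2\pi\alpha(K+Lz)^\beta+t\log(K+Lz).
\]
Since $L\le K/2$, the amplitude $a(z)=\phi(z)(K+Lz)^{\sigma-1}$ satisfies $a^{(j)}\ll_j K^{\sigma-1}$ uniformly for $0\le\sigma\le1$. The trivial bound $LK^{\sigma-1}$ is then immediate, which is the first entry of \eqref{eq:second-derivative} and also the case $1+P+r\operatorname{dist}(t,I)\ll1$ of \eqref{eq:off-window}. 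The work is in the stationary-phase analysis of
\[
 f(x)=2\pi\alpha x^\beta+t\log x,\qquad
 f'(x)=\frac{2\pi\alpha\beta x^\beta+t}{x}.
\]
The key observation is that $t\in I$ exactly when $f'$ vanishes somewhere in $[K-L,K+L]$.

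\emph{Step 1: the bound $K^{\sigma-\beta/2}$.} I split according to $\operatorname{dist}(t,I)$, with $c>0$ a small constant depending on $\alpha,\beta$.
\begin{itemize}
\item If $\operatorname{dist}(t,I)\le cK^\beta$, then $t=-2\pi\alpha\beta K^\beta(1+O(c))$. Hence
\[
 f''(x)=\frac{2\pi\alpha\beta(\beta-1)x^\beta-t}{x^2}=\frac{2\pi\alpha\beta^2x^\beta+O(cK^\beta)}{x^2}\asymp K^{\beta-2}
\]
throughout the window; note this remains valid at $\beta=1$. The second-derivative test (Titchmarsh, Lemma~4.4) applied to $e^{if}$, combined with partial integration against the monotone-piece amplitude of total variation $\ll K^{\sigma-1}$, gives $\ll K^{\sigma-1}(K^{\beta-2})^{-1/2}=K^{\sigma-\beta/2}$.
\item If $\operatorname{dist}(t,I)>cK^\beta$, then $|f'|\gg(K^\beta+|t|)/K$ on the window, and the first-derivative argument of Step~2 gives something far smaller than $K^{\sigma-\beta/2}$.
\end{itemize}

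\emph{Step 2: the off-window bound \eqref{eq:off-window}.} Here $t\notin I$. Since $\Phi'(z)=Lf'(K+Lz)$, we have $|\Phi'(z)|\ge L\operatorname{dist}(t,I)/(2K)\gg r\operatorname{dist}(t,I)$. To obtain the decay in $P$ as well, use that the zero of $\Phi'$ lies outside $[-1,1]$ while $\supp\phi\subset[-1+c_0,1-c_0]$. When $\operatorname{dist}(t,I)\le cK^\beta$, Step~1 shows that $\Phi''=L^2f''\asymp L^2K^{\beta-2}=P$ with constant sign. Since $\Phi'$ is monotone and vanishes outside $[-1,1]$, we get $|\Phi'|\gg c_0P$ on $\supp\phi$. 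In all cases, therefore,
\[
 \min_{\supp\phi}|\Phi'|\gg \mathcal T:=P+r\operatorname{dist}(t,I).
\]
For the higher derivatives, $\Phi^{(j)}\ll_j r^j(K^\beta+|t|)$ for $j\ge2$. When $|t|\ll K^\beta$ this is $\ll r^{j-2}P$. When $|t|\gg K^\beta$ it is $\ll r^{j-1}\cdot r|t|\ll r^{j-1}\mathcal T$. Either way $\Phi^{(j)}\ll\mathcal T r^{j-2}\le\mathcal T$. Repeated integration by parts with the operator $\frac{d}{dz}\bigl(\cdot/\Phi'\bigr)$, or equivalently \cite[Theorem~1.1]{MSY} exactly as in the proof of \eqref{eq:first-derivative}, then yields $\ll_A K^{\sigma-1}\mathcal T^{-A}$ for the $z$-integral. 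Multiplying by $L$ and combining with the trivial bound when $\mathcal T\ll1$ gives \eqref{eq:off-window}.

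The main obstacle is the uniformity in $P$ without assuming $P\to\infty$, in particular for $t$ just outside $I$, where $r\operatorname{dist}(t,I)$ is tiny. This is exactly where the compact support of $\phi$ strictly inside $(-1,1)$ must be used: it converts the curvature $\Phi''\asymp P$ into a lower bound on $|\Phi'|$. I also need to check that the constants implicit in the sign and size of $f''$ depend only on $\alpha$ and $\beta$, including the endpoint case $\beta=1$.
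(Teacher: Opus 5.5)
Your proof is correct and follows the paper's route: the trivial bound, the weighted second-derivative test for $K^{\sigma-\beta/2}$, rescaling to the window, using the gap between $\supp\phi$ and $\pm1$ to convert curvature $\asymp P$ into $|\Phi'|\gg P+r\operatorname{dist}(t,I)$, and then repeated integration by parts. The only difference is that the paper works in the variable $u=\log x$ (then $u=\log K+rz$), where the $t$-term of the phase is linear. There the curvature $\asymp K^\beta$ (respectively $\asymp P$) and the higher-derivative bounds $\ll_j Pr^{j-2}$ hold independently of $t$, which removes your case splits on $\operatorname{dist}(t,I)$ versus $cK^\beta$ and on $|t|$ versus $K^\beta$.
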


\begin{proof}
The first entry of \eqref{eq:second-derivative} is the trivial bound.
In the variable $u=\log x$, the phase is $2\pi\alpha e^{\beta u}+tu$,
whose second derivative has size $\asymp K^\beta$.
The amplitude $e^{\sigma u}\phi((e^u-K)/L)$ has supremum norm
and total variation $O(K^\sigma)$. The weighted second derivative
test gives the second entry.

For the off-window estimate, set $u=\log K+rz$. Then
\[
 h(t+i(1/2-\sigma))
 =rK^\sigma e^{it\log K}\int a_{\sigma,r}(z)e(f_t(z))\dd z,
\]
where
\[
 a_{\sigma,r}(z)=e^{\sigma rz}\phi\!\left(\frac{e^{rz}-1}{r}\right),
 \qquad f_t(z)=\alpha K^\beta e^{\beta rz}+\frac{trz}{2\pi}.
\]
Choose fixed $0<c<c'<1$ with $\supp\phi\subset[-c,c]$, and integrate
over $[r^{-1}\log(1-c'r),r^{-1}\log(1+c'r)]$.
The amplitude vanishes near the endpoints and has uniformly bounded
derivatives. For $t\notin I$, with $d=\operatorname{dist}(t,I)$,
\[
 \min|f_t'|\gg P+rd,\qquad
 |f_t^{(j)}|\ll_j Pr^{j-2}\quad(j\ge2).
\]
The first inequality uses the fixed support gap. When $P+rd\ge1$,
repeated integration by parts gives arbitrary powers of $(P+rd)^{-1}$,
since the higher derivatives are $O_j(P+rd)$; there are no boundary
terms. When $P+rd<1$, the trivial bound suffices.
This proves \eqref{eq:off-window}, including the small-curvature case.
\end{proof}

\begin{proposition}[Uniform frequency reduction]\label{prop:frequency-reduction}\label{prop:wide-frequency}
Let $h_m,I_m$ be obtained by replacing $\alpha$ by $m\alpha$.
Under \eqref{eq:scales}--\eqref{eq:frequency-parameters}, put
\[
 \begin{gathered}
 w_0=1-\delta,\qquad a_0=\max\{b/2,w_0\},\\
 q=\max\{b-w_0,w_0\}.
 \end{gathered}
\]
Fix $0<\eta<\frac14\min\{w_0,\beta-w_0\}$, and enlarge $I_m$
by $K^{w_0+\eta}$ at each endpoint to obtain $J_m$.
It has absolute height $\asymp K^b$ and length $O(K^{q+\eta})$.
For every $B>0$, uniformly for $1\le m\le K^R$,
\begin{equation}\label{eq:wide-frequency}
 |S_\pi(m\alpha,\beta;K,K^\delta)|
 \ll K^{-a_0}
 \sum_{\substack{\rho=\sigma+it\\t\in J_m,\ \sigma\ge1/2}}K^\sigma
       +K^{-B}.
\end{equation}
If also $\delta>1-\beta/2$, the original stationary window suffices:
\begin{equation}\label{eq:frequency-reduction}
 |S_\pi(m\alpha,\beta;K,K^\delta)|
 \ll K^{-b/2}
 \sum_{\substack{\rho=\sigma+it\\t\in I_m,\ \sigma\ge1/2}}K^\sigma
       +K^{-B},
\end{equation}
where $|I_m|\asymp K^v$. Constants may depend on $\eta$ and $B$.
\end{proposition}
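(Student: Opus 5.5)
The plan starts from the zero-sum reduction \eqref{eq:zero-reduction} of Proposition~\ref{prop:harmless}, applied with $\alpha$ replaced by $m\alpha$. We then bound each term $h_m((\rho-1/2)/i)=h_m(t+i(1/2-\sigma))$ using Proposition~\ref{prop:localization}, also with $m\alpha$ in place of $\alpha$. After this replacement the relevant scales are: the height $\asymp mK^\beta=K^b$, the window length $|I_m|\asymp mK^\lambda=K^v$, and the curvature parameter $P_m=mK^\beta r^2=K^{b-2w_0}$, where $r=K^{-w_0}$.

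First we reduce to zeros with $\sigma\ge1/2$. If $\pi$ is self-dual, or is zeta, the functional equation pairs each zero $\rho$ with $1-\bar\rho$ at the same ordinate. The bound $K^{\sigma}$ for $\sigma<1/2$ is then dominated by $K^{1-\sigma}$, so we may restrict to $\sigma\ge1/2$ at the cost of a factor $2$. (For the fixed non-self-dual cases this step would pass to the contragredient, but all of our applications are self-dual.)

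Next we split the ordinates into $t\in J_m$ and $t\notin J_m$. For $t\in J_m$ we use \eqref{eq:second-derivative}, in which the second-derivative entry becomes $K^{\sigma-b/2}$ because the second derivative is now of size $mK^\beta$. This gives
\[
|h_m|\ll K^\sigma\min\{K^{-w_0},K^{-b/2}\}=K^{\sigma-a_0},
\]
which is the main term of \eqref{eq:wide-frequency}. For $t\notin J_m$ we use \eqref{eq:off-window}. There $\operatorname{dist}(t,I_m)\ge K^{w_0+\eta}$, so $r\operatorname{dist}\ge K^{\eta}\operatorname{dist}/\operatorname{dist}(\cdot)$ in the sense that $r\operatorname{dist}(t,I_m)\ge K^\eta$ and moreover $r\operatorname{dist}(t,I_m)\gg K^{\eta/2}(1+|t|)^{c}$ for large $|t|$. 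Summing over zeros with the local count \eqref{eq:density-local-count}, where dyadic shells in $\operatorname{dist}$ contain $\ll(\operatorname{dist}+K^b)\log(K^b+|t|)$ zeros, and taking $A$ large gives $O(K^{-B})$. This sum is uniform in $m\le K^R$, since all the powers of $K$ involved are bounded in terms of $R$.

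The length claim follows because $|I_m|\asymp K^v$ and
\[
v=b-w_0-(1-\beta-\lambda)+\cdots = b-w_0,
\]
since $\lambda=\beta-w_0$. Hence $|J_m|\ll K^{b-w_0}+K^{w_0+\eta}\le K^{q+\eta}$. The height of $J_m$ is $\asymp K^b$ because $w_0+\eta<b$, which holds since $\eta<(\beta-w_0)/4$. For \eqref{eq:frequency-reduction} we assume $\delta>1-\beta/2$, that is, $b\ge\beta>2w_0$. Then $P_m=K^{b-2w_0}\ge K^{\beta-2w_0}$ is a positive power of $K$. Consequently \eqref{eq:off-window} already gives $K^{-B}$ for every $t\notin I_m$, with no enlargement needed, and the factor $(1+P+rd)^{-A}$ handles the summation in $d$ exactly as before. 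Inside $I_m$ we use the bound $K^{\sigma-b/2}$.

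The main obstacle is making the tail summation rigorous and uniform in $m$, given that zeros may lie off the critical line. The plan is to bound the number of zeros with $|t|\le U$ by $U\log U$, then bound the tail
\[
\sum_{\rho} K^\sigma(1+r\operatorname{dist})^{-A}
\]
by $K\cdot K^{-\eta A}\cdot\mathrm{poly}(K)$ over the range $|t|\le K^{2b}$. For larger $|t|$ the decay $(r|t|)^{-A}$ dominates the zero-counting growth once $A\ge2$.
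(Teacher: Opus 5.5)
Your proposal is correct and follows essentially the paper's proof. It uses the zero-sum reduction with $m\alpha$ in place of $\alpha$, the bound $K^{\sigma-a_0}$ from \eqref{eq:second-derivative} inside the window, the off-window bound summed against the unit-interval zero count (your split at $|t|\le K^{2b}$ stands in for the paper's comparison with an integral), the reflection $\rho\mapsto1-\overline\rho$, and the growth $P_m\ge K^{\beta-2w_0}$ for the second estimate. Two small remarks. You leave implicit what the paper states: Proposition~\ref{prop:harmless} holds uniformly in $m$ because $\tau_m\ge\tau\gg K^\lambda$ and the first-derivative constants do not depend on the size of $m\alpha$. Also, the pairing $\rho\leftrightarrow1-\overline\rho$ holds for every unitary $\pi$, so your self-duality caveat is not needed.
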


\begin{proof}
The transform bounds above replace $P$ by $P_m=mP$ and yield
$|h_m((\rho-1/2)/i)|\ll K^{\sigma-a_0}$.
For $t\notin J_m$, the off-window bound has
$r\operatorname{dist}(t,I_m)\ge K^\eta$.
Using the unit-interval zero count
$\#\{\rho:t\le\Im\rho<t+1\}\ll_\pi\log(3+|t|)$ and comparing
with an integral gives, for every integer $N\ge3$,
\[
 \sum_{\Im\rho\notin J_m}|h_m((\rho-1/2)/i)|
 \ll_{N,R}K^{1-\eta(N-1)}\log K.
\]
Choosing $N$ sufficiently large makes this $O(K^{-B})$.
The choice of $\eta$ gives $q+\eta<b$, so $J_m$ remains at height
$\asymp K^b$. Proposition~\ref{prop:harmless} applies uniformly
because $\tau_m\asymp K^{b-w_0}\to\infty$. Apply the explicit
formula and use $\rho\mapsto1-\overline\rho$ to replace zeros
with $\sigma<1/2$ by their partners. This proves \eqref{eq:wide-frequency}.

If $\delta>1-\beta/2$, then $b/2>w_0$ and
$P_m\ge K^{\beta-2w_0}$ grows as a fixed power.
The same summation outside $I_m$ gives
\[
 \sum_{\Im\rho\notin I_m}|h_m((\rho-1/2)/i)|
 \ll_{N,R}K(1+P_m)^{1-N}\log K=O(K^{-B}),
\]
after increasing $N$. The second derivative bound now proves
\eqref{eq:frequency-reduction}.
\end{proof}

When a density estimate is applied with a varying strip
exponent $a$, uniformity follows by a finite mesh: enlarge $a$ to
the next mesh point and use a bounded covering. All relevant $a$
lie in a fixed compact subinterval of $(1/2,1)$, and their density
exponents change by $O_R(\eta)$ on a mesh of width $\eta$.
Choose the mesh and the density loss sufficiently small in terms
of $\varepsilon/(1+\beta+R)$. This absorbs all losses into
$K^\varepsilon$ without assuming uniformity of an implicit constant
in a continuously varying parameter.

\section{The Riemann zeta function}\label{sec:zeta}

\begin{proof}[Proof of Theorem~\ref{thm:zeta-sum}]
Put $w_0=1-\delta$, $a_0=\max\{b/2,w_0\}$ and
$q=\max\{b-w_0,w_0\}$ as in Proposition~\ref{prop:wide-frequency}.
The zero window $J_m$ has absolute height $\asymp K^b$ and
length $O(K^{q+\eta})$. Write $\nu=\max\{q,2b/3\}=\nu_m$.
After reflection if the height is negative, cover $J_m$ by a
bounded number of intervals $[T,T+T^a]$, with $T\asymp K^b$
and $a=(\nu+2\eta)/b\in[2/3,1)$. Choose $\eta$ small enough
in terms of $\varepsilon$ and $R$. Theorem~\ref{thm:zeta-density}
and the finite-mesh argument in Section~\ref{sec:explicit} give
\begin{equation}\label{eq:zeta-density-window}
 N_\zeta(s;J_m)\ll K^{\nu\mathcal B(s)+\varepsilon},
 \qquad 1/2\le s<7/8.
\end{equation}
All losses from the enlargement and the mesh are absorbed into
$K^\varepsilon$. By \cite{OAI7}, every nontrivial zeta zero
has real part at most $7/8$. Put
$M_J=\#\{\rho:\Re\rho\ge1/2,\ \Im\rho\in J_m\}$.
Partial summation, including multiplicities, gives the exact identity
\begin{equation}\label{eq:zeta-partial-summation}
 \sum_{\substack{\rho=\sigma+it\\t\in J_m,\ \sigma\ge1/2}}K^\sigma
 =K^{1/2}M_J+(\log K)\int_{1/2}^{7/8}K^sN_\zeta(s;J_m)\dd s.
\end{equation}
This includes zeros on both boundary lines. Since
$M_J\ll K^{q+\eta+\varepsilon}$, $q\le\nu$ and
$\mathcal B(1/2)=1$, it remains to verify
\begin{equation}\label{eq:zeta-maximization}
 \max_{1/2\le s\le7/8}\{s+x\mathcal B(s)\}=\mathcal Q(x),
 \qquad x\ge0.
\end{equation}

The function is affine on $[5/6,7/8]$ and convex on each of
$[4/5,5/6]$ and $[7/10,4/5]$. On $[1/2,7/10]$, put
\[
 G_x(s)=s+\frac{3x(1-s)}{2-s},\qquad
 G_x'(s)=1-\frac{3x}{(2-s)^2}.
\]
The possible maxima are therefore
\begin{equation}\label{eq:zeta-four-bands}
 \max\left\{\frac78+\frac x4,\ \frac56+\frac x3,
 \ \frac45+\frac{3x}7,\ \max_{1/2\le s\le7/10}G_x(s)\right\}.
\end{equation}
The middle two entries are dominated by the maximum of the
first and $G_x(7/10)=7/10+9x/13$: they are below the first
up to $x=91/230$, and below $G_x(7/10)$ thereafter.
The concave function $G_x$ has its maximum at $7/10$ for
$x\le169/300$, at $2-\sqrt{3x}$ for
$169/300\le x\le3/4$, and at $1/2$ for $x\ge3/4$.
The first two relevant lines cross at
$7/8+x/4=7/10+9x/13$, or $x=91/230$.
This proves \eqref{eq:zeta-maximization}. Combining it with
\eqref{eq:wide-frequency} proves the theorem, since
$K^{-a_0}=\min\{K^{\delta-1},m^{-1/2}K^{-\beta/2}\}$.
\end{proof}

\begin{proof}[Proof of Corollary~\ref{cor:zeta}]
First set $m=1$. If $\delta\le1-\beta/2$, then
$a_0=1-\delta$ and $\nu=\max\{1-\delta,2\beta/3\}$.
The saving condition is exactly $\mathcal Q(\nu)<1$.
By \eqref{eq:q-function}, this is equivalent to $\nu<13/30$,
or
\begin{equation}\label{eq:short-saving-range}
 \beta<13/20,\qquad \delta>17/30,
\end{equation}
in addition to the standing condition $\delta>1-\beta$.

For $\delta\ge1-\beta/2$, put $\nu=\max\{\lambda,2\beta/3\}$;
the saving condition is $\mathcal Q(\nu)<\delta+\beta/2$.
If $\beta<13/20$, at the boundary $\delta=1-\beta/2$ it holds
because $\mathcal Q(2\beta/3)<1$. The function $\mathcal Q$
is nondecreasing and $1$-Lipschitz, so the saving cannot decrease
as $\delta$ increases. This proves the first branch of
\eqref{eq:zeta-threshold}.

For $\beta\ge13/20$, one has $\nu\ge13/30$, where the new
$\mathcal Q$ agrees with the former untruncated maximum.
If $\delta\le\delta_0:=1-\beta/3$, then $\nu=2\beta/3$;
substitution in \eqref{eq:q-function} gives the second and third
branches of \eqref{eq:zeta-threshold}. The resulting threshold
is at most $\delta_0$ precisely when $\beta\le\beta_*$, as follows
from $1+11\beta/6=2\sqrt{2\beta}$.
For $\beta>\beta_*$ the threshold has $\nu=\lambda$ in the
square-root branch. Solving
\[
 2+3\lambda-2\sqrt{3\lambda}-\beta/2=1-\beta+\lambda
\]
gives $\lambda=1-\beta/4-(\sqrt3/2)\sqrt{1-\beta}$ and the
last branch. The slope is strictly less than $1$ until the final
linear branch, whose saving is $(1-\beta)/2>0$.
At $\beta=13/20$, $\mathcal Q(2\beta/3)=1$, so the shorter
range supplies no strict saving; this explains the jump of
$d(\beta)$ from its left limit $17/30$ to $d(13/20)=27/40$.
The displayed branches give $\min d=17/30$.

For the frequency assertion, fix an admissible $\beta,\delta$,
write $w_0=1-\delta<13/30$, and regard $b=\beta+u$ as variable.
The exponent in the theorem is
\[
 E(u)=\mathcal Q\!\left(\max\{b-w_0,w_0,2b/3\}\right)
       -\max\{b/2,w_0\}.
\]
For $b\le3w_0/2$ it is constant. For $3w_0/2\le b\le2w_0$,
the argument of $\mathcal Q$ is $2b/3\le4w_0/3<26/45<16/27$.
On this range $\mathcal Q'\le3/4$, so $E'\le1/2$.
For $b\ge2w_0$, the subtracted term has derivative $1/2$,
and the first term has derivative at most $1$. Thus
$E(u)\le E(0)+u/2$, proving \eqref{eq:zeta-saving}.
\end{proof}

For $\beta=1/2$, $\delta=3/5$ and $0\le u\le1/10$,
$\nu_m=2/5$ and $a_0=2/5$. Hence
\[
 \mathcal Q(2/5)-2/5
 =\frac7{10}+\frac{18}{65}-\frac25=\frac{15}{26},
\]
which proves the example after Corollary~\ref{cor:zeta}.
Table~\ref{tab:zeta} follows by the same substitution.
At $\beta=1$, $\mathcal Q(\nu_m)\ge1/2+\nu_m$ implies
$\mathcal Q(\nu_m)-a_0\ge\delta$, so no strict saving follows.
For nonzero integral $\alpha$, the phase $e(\alpha n)$ is identically $1$.

The localization gain in this example uses more than the global
density bound. At $m=1$, the zero height is $K^{1/2}$ and the
window length is $K^{2/5}$. At $s=7/10$, the global density
bound gives the weighted exponent
$7/10+(1/2)(9/13)=68/65>1$, whereas our short-strip theorem gives
$7/10+(2/5)(9/13)=127/130<1$. Subtracting the transform exponent
$2/5$ gives $15/26$. Thus the global bound, even combined with
the $7/8$ cutoff and the local total-zero count, gives no saving
in this absolute-value argument, while the short-strip bound does.

We also compare with the earlier short-strip input, using the
same zero-free boundary in both cases. Ye--Zhang's bounds alone
replace $\mathcal Q(x)$ by
\[
 \mathcal Q_{\mathrm{old}}(x)=
 \max\left\{\frac78+\frac x4,\
       \max_{1/2\le s\le3/4}\left(s+\frac{3x(1-s)}{2-s}\right)\right\}.
\]
Its first two transitions are $5/14$ and $25/48$, with
intervening branch $3/4+3x/5$; the subsequent square-root and
linear branches agree with those of $\mathcal Q$. Consequently,
\begin{equation}\label{eq:zeta-density-gain}
 \mathcal Q(x)<\mathcal Q_{\mathrm{old}}(x)
 \quad\Longleftrightarrow\quad 5/14<x<169/300.
\end{equation}
For $\beta=2/3$, $\delta=17/25$, and
$1\le m\le K^{107/600}$, one has $\nu_m=4/9+2u/3$ in the
linear branch, so
\begin{equation}\label{eq:frequency-example}
 S_\zeta(m\alpha,2/3;K,K^{17/25})
       \ll K^{263/390+\varepsilon}m^{-1/26}.
\end{equation}
At $m=1$, the earlier short-strip bounds give $K^{41/60+\varepsilon}$
in the same argument, which does not save over $K^{17/25}$.

Finally, the zero-free boundary can be kept as a parameter.
For $6/7\le\vartheta\le11/12$ with no zeros to its right, replace
$\mathcal Q$ by $\mathcal Q_\vartheta$. Its first branch and
first transition are
\begin{equation}\label{eq:boundary-refinement}
 \mathcal Q_\vartheta(x)=\vartheta+2(1-\vartheta)x
 \quad(0\le x\le x_\vartheta),\qquad
 x_\vartheta=\frac{\vartheta-7/10}{2\vartheta-17/13};
\end{equation}
the other branches are those in \eqref{eq:q-function}, starting
the next branch at $x_\vartheta$. This follows by the same endpoint
comparison. The boundary $11/12$ of \cite{OAI11} gives
$x_\vartheta=169/410$. Liu's boundary \cite[Theorem~27.1]{BLiu} is
\[
 \begin{gathered}
 \vartheta_L=\frac{18\xi+7}{18\xi+9}=0.874957019420098946\ldots,\\
 1188\xi^3-2160\xi^2+171\xi+190=0,
 \end{gathered}
\]
where $0.38858<\xi<0.38859$ specifies the root uniquely.
It slightly improves the first branch and its transition.
For every such $\vartheta<1$, however,
$\mathcal Q_\vartheta(x)<1$ exactly when $x<13/30$;
thus the admissible ranges in Corollary~\ref{cor:zeta} are unchanged.

\section{\texorpdfstring{$L$-functions for cusp forms for $\Gamma_0(N)$}
 {L-functions for cusp forms for Gamma0(N)}}\label{sec:gl2}

\begin{proof}[Proof of Theorem~\ref{thm:gl2-sum}]
Write $\pi=g\otimes\chi$ and $I=I_m$. Since $L(s,\pi)$ is entire, its
explicit formula has no pole terms. Proposition~\ref{prop:frequency-reduction}
reduces the estimate to the weighted zero sum in
\eqref{eq:frequency-reduction}. The covering argument of
Section~\ref{sec:zeta} now uses $a=v/b\in(1/2,1)$,
so no enlargement of the length exponent is needed. Self-duality
allows reflection if $I$ lies at negative height.

Put $C=b+3v$ and $D=b+v$.
Ye--Zhang \cite[(2.6)--(2.7)]{YZ}, together with the local bound
$\#\{\rho_\pi:\Im\rho_\pi\in I\}\ll K^{v+\varepsilon}$,
give
\begin{equation}\label{eq:gl2-density-window}
 N_\pi(s;I)\ll K^{\mathcal D_2(s)+\varepsilon},\qquad
 \mathcal D_2(s)=
 \begin{cases}
 \displaystyle\min\!\left\{v,\frac{C(1-s)}{2-s}\right\},
                                     &1/2\le s<3/4,\\[5pt]
 \displaystyle\frac{D(1-s)}s,        &3/4\le s<1.
 \end{cases}
\end{equation}
On the upper band, $D(1-s)/s\le D/3<v$.
Partial summation as in \eqref{eq:zeta-partial-summation}, with
$\zeta$ replaced by $\pi$ and upper endpoint $1$, gives
\begin{equation}\label{eq:gl2-weighted-max}
 S_\pi(m\alpha,\beta;K,K^\delta)\ll K^{-b/2+\sup_{1/2\le s<1}\{s+\mathcal D_2(s)\}
                      +\varepsilon}.
\end{equation}
The partial summation identity includes critical-line zeros and zeros at $s=3/4$;
the lower-band supremum may be a limit from the left.

The entries in the minimum in \eqref{eq:gl2-density-window}
meet at
\[
 c=\frac{b+v}{b+2v}\in(2/3,3/4).
\]
Thus $s+\mathcal D_2(s)=s+v$ on $[1/2,c]$, where it is
increasing. On $[c,3/4)$ it equals the concave function
\[
 G(s)=s+\frac{C(1-s)}{2-s},\qquad G'(s)=1-\frac C{(2-s)^2}.
\]
The lower-band supremum is therefore $G(s_*)$, where
\begin{equation}\label{eq:gl2-optimizer}
 s_* =\min\{3/4,\max\{c,2-\sqrt C\}\}.
\end{equation}
The transitions occur at $C=25/16$ and
$(b+2v)^2=C$, giving exactly
$G(s_*)=\mathcal F_2(b,v)$ in \eqref{eq:f2}.
On the upper band, $s+D(1-s)/s$ is convex and has supremum
$\max\{1,3/4+D/3\}$. Finally,
\[
 \mathcal F_2(b,v)\ge\frac34+\frac C5
       >\frac34+\frac D3,
 \qquad \frac C5-\frac D3=\frac{4v-2b}{15}>0.
\]
The supremum in \eqref{eq:gl2-weighted-max} is consequently
$\max\{1,\mathcal F_2(b,v)\}$, proving
\eqref{eq:gl2-sum}.
\end{proof}

\begin{proof}[Proof of Corollary~\ref{cor:gl2}]
First set $m=1$. Since $\lambda>\beta/2$, the entry $1$ in \eqref{eq:gl2-saving}
causes no obstruction. The saving condition is
\begin{equation}\label{eq:gl2-saving-condition}
 \mathcal F_2(\beta,\lambda)<1-\beta/2+\lambda.
\end{equation}
On each applicable branch, $\partial\mathcal F_2/\partial\lambda<1$.
Indeed, the three derivatives are $3/5$, $3-3/\sqrt C$, and
$1-\beta/(\beta+2\lambda)^2$; in the middle branch
$\sqrt C\le2-c<4/3$. Thus the saving increases with $\lambda$.
For $0<\beta\le1/2$, at the limiting endpoint $\lambda=\beta/2$
one has $\mathcal F_2=3/4+\beta/2$. This proves the first branch of
\eqref{eq:gl2-threshold}.

For $\beta>1/2$, solve equality in \eqref{eq:gl2-saving-condition}
on the three branches of \eqref{eq:f2}, and then use
$\delta=1-\beta+\lambda$. The first gives
$\delta=3/8+3\beta/4$ and reaches $C=25/16$ at $\beta=11/20$.
In the middle branch the equation becomes
$4(\sqrt C-3/2)^2=3-5\beta$, giving the third entry of
\eqref{eq:gl2-threshold}. It reaches
$(\beta+2\lambda)^2=C$ when $\beta^2+3\beta-2=0$,
that is, at $\beta_c$. The last branch reduces to
$\lambda>\beta^2/(2(1-\beta))$ and gives the fourth entry.
The branches agree at their endpoints; the first decreases to
$3/4$ and the remaining branches increase from $3/4$ to $1$.

There is no further saving range for $\beta\ge2/3$:
$\mathcal F_2\ge c+\lambda$ implies
\[
 \omega_2\le\frac{\lambda}{\beta+2\lambda}-\frac\beta2
             <\frac13-\frac\beta2\le0.
\]
This proves both the exact range and the minimum.
The uniform assertion follows from the frequency comparison
at the end of Section~\ref{sec:gl3}.
\end{proof}

For $\beta=1/2$ and $3/4<\delta\le4/5$, the first branch of
\eqref{eq:f2} gives exponent $3/10+3\delta/5$.
This, $d_2(1/2)=3/4$, and the frequency comparison prove
the example after Corollary~\ref{cor:gl2};
the entries of Table~\ref{tab:gl2} follow by substitution.

\section{\texorpdfstring{$L$-functions for Maass forms for $\SL_3(\Z)$}
 {L-functions for Maass forms for SL3(Z)}}\label{sec:gl3}

\begin{proof}[Proof of Theorem~\ref{thm:gl3-sum}]
Write $I=I_m$. Again $L(s,f)$ is entire, so no pole terms occur, and
Proposition~\ref{prop:frequency-reduction} applies.
Put
\[
 C=2b+4v,\qquad D=2(b+v),\qquad
 c=\frac{2b+v}{2b+2v}
     =1-\frac{v}{D}\in(3/4,5/6).
\]
With $a=v/b\in(1/2,1)$, Ye--Zhang
\cite[(2.10)--(2.11)]{YZ} and the same bounded covering give
\begin{equation}\label{eq:gl3-density-window}
 N_f(s;I)\ll
 \begin{cases}
 K^{C(1-s)/(3-2s)+\varepsilon},&1/2\le s<c,\\[3pt]
 K^{D(1-s)+\varepsilon},&c\le s<1.
 \end{cases}
\end{equation}
The local bound $N_f(s;I)\ll K^{v+\varepsilon}$ is
stronger throughout the lower band, since
\[
 \frac{C(1-s)}{3-2s}-v
       =\frac{D(c-s)}{3-2s}\ge0\qquad(1/2\le s\le c).
\]
On the upper band, $D(1-s)\le D(1-c)=v$.
Thus the effective density bound is
\begin{equation}\label{eq:gl3-effective-density}
 N_f(s;I)\ll K^{\mathcal D_3(s)+\varepsilon},\qquad
 \mathcal D_3(s)=\min\{v,D(1-s)\}.
\end{equation}
Partial summation as in \eqref{eq:zeta-partial-summation}, with upper
endpoint $1$, now reduces the weighted sum to two linear functions: $s+v$
on $[1/2,c]$ and $s+D(1-s)$ on $[c,1]$. Hence
\begin{equation}\label{eq:gl3-weighted-max}
 \sup_{1/2\le s<1}\{s+\mathcal D_3(s)\}
       =\max\{c+v,1\}.
\end{equation}
Inserting this in \eqref{eq:frequency-reduction} proves
\eqref{eq:gl3-sum}, since
$c+v-b/2=1+v-b/2-v/(2(b+v))$.
The first term governs when $D\le1$, and the second when
$D\ge1$, because $c+v-1=v(1-1/D)$.
\end{proof}

\begin{proof}[Proof of Corollary~\ref{cor:gl3}]
First set $m=1$. The two entries in \eqref{eq:gl3-saving} are the savings in the
two terms of \eqref{eq:gl3-sum}. For $0<\beta<1$, both are
positive precisely when
\[
 \lambda>\max\left\{\frac\beta2,\frac{\beta^2}{1-\beta}\right\}.
\]
The entries cross at $\beta=1/3$; the second is less than
$\beta$ precisely when $\beta<1/2$. Substituting
$\delta=1-\beta+\lambda$ gives \eqref{eq:gl3-threshold}.
Its first branch decreases to $5/6$, and its second increases
from $5/6$ to $1$. For $1/2\le\beta\le1$,
$\lambda/(2(\beta+\lambda))<1/4\le\beta/2$, so no strict saving
is supplied by \eqref{eq:gl3-sum}. The uniform assertion follows
from the frequency comparison below.
\end{proof}

For $\beta=1/3$, the second entry of \eqref{eq:gl3-saving}
is the smaller and equals $(\delta-5/6)/(3\delta-1)$.
Together with the frequency comparison, this proves
the example after Corollary~\ref{cor:gl3};
substitution gives Table~\ref{tab:gl3}.
Both cusp-form arguments use Ye--Zhang's density bounds and
the local zero count, without a Ramanujan hypothesis.

To complete the uniform assertions in Corollaries~\ref{cor:gl2}
and \ref{cor:gl3}, write
$\mathcal D_j(s;b,v)$ for the density envelopes in
\eqref{eq:gl2-density-window} and \eqref{eq:gl3-effective-density}.
For $j=2,3$,
\[
 \mathcal D_j(s;\beta+u,\lambda+u)
       \le\mathcal D_j(s;\beta,\lambda)+u.
\]
For $j=2$, below $s=2/3$ the local-count entry governs; above
$2/3$ the lower-band density slope in $u$ is
$4(1-s)/(2-s)\le1$, and the upper-band slope is $2(1-s)/s\le2/3$.
For $j=3$, use the local-count entry below its old crossing
$c>3/4$ and the slope $4(1-s)<1$ above it. Taking the weighted
supremum and including $K^{-u/2}$ gives the factor $m^{1/2}$
in both corollaries.

\vspace{0.125in}
\noindent
{\bf Funding  Sources}

\noindent
This research did not receive any specific 
grant from funding agencies in the public, 
commercial, or not-for-profit sectors.

\vspace{0.125in}
\noindent
{\bf Declaration of generative AI and 
AI-assisted technologies in the manuscript 
preparation process.}

\noindent
During the preparation of this work, the 
author used OpenAI's GPT-6 Astra to 
implement proof strategy, assist 
with computations, support literature 
searches, and typeset the manuscript. 
The author carefully reviewed, 
verified, and edited all AI-generated 
content as necessary and takes full 
responsibility for the accuracy, 
originality, and integrity of the 
published article.

\end{document}